 \newcommand {\C} {{\mathbb C}}
 \newcommand {\Z} {{\mathbb Z}}
 \newcommand {\Q} {{\mathbb Q}}
 \newcommand {\Ss} {{\mathcal S}}
  \newcommand {\Y} {{\mathcal Y}}
 \newcommand {\K} {{\mathcal K}}
  \newcommand {\M} {{\mathcal M}}
 \newcommand {\X} {{\mathscr X}}
 \newcommand {\coker}{\text{\rm coker}}
 \newcommand {\cl}{\text{\rm cl}}
 \newcommand {\ch}{\text{\CH}}
\newcommand{\Spec}{\text{\rm Spec}}
\newcommand{\CH}{\text{\rm CH}}
\newcommand{\ol}{\overline}
\newcommand{\ul}{\underline}
\newcommand{\IM}{\text{\rm Image}}
\newcommand{\Ext}{\text{\rm Ext}}
\newcommand{\MHS}{\text{\rm MHS}}
\newcommand{\bs}{{\backslash}}
 \newtheorem{thm}[equation]{Theorem}
 \newtheorem{cor}[equation]{Corollary}
 \newtheorem{lemma}[equation]{Lemma}
 \newtheorem{prop}[equation]{Proposition}
 \newtheorem{conj}[equation]{Conjecture}
 \newtheorem{rmk}[equation]{Remark}
 \newtheorem{ex}[equation]{Example}
 \newtheorem{q}[equation]{Question}
 \theoremstyle{definition}
 \newtheorem{ass}[equation]{Assumptions}
 \numberwithin{equation}{section} 
\begin{document}

\title[Beilinson's Hodge Conjecture]{A relative version of the Beilinson-Hodge conjecture}

\author{Rob de Jeu}

\address{Faculteit Exacte Wetenschappen\\  Afdeling Wiskunde \\
VU University Amsterdam \\ The Netherlands}

\email{r.m.h.de.jeu@vu.nl}

\author{James D. Lewis}

\address{632 Central Academic Building\\
University of Alberta\\
Edmonton, Alberta T6G 2G1, CANADA}

\email{lewisjd@ualberta.ca}

\author{Deepam Patel}

\address{Faculteit Exacte Wetenschappen\\  Afdeling Wiskunde \\
VU University Amsterdam \\ The Netherlands}

\email{deeppatel1981@gmail.com}

\thanks{The authors are grateful to the Netherlands
Organisation for Scientific Research (NWO) 
The second author also acknowledges partial {support} by a grant from the Natural Sciences
and Engineering Research Council of Canada.}

\subjclass[2000]{Primary: 14C25, 19E15;  Secondary: 14C30}

\keywords{Chow group, algebraic cycle, regulator, Beilinson-Hodge conjecture, Abel-Jacobi map,
Bloch-Beilinson filtration}

\renewcommand{\abstractname}{Abstract} 

\begin{abstract}  Let $k \subseteq \C$ be an algebraically closed subfield, and $\X$ a variety defined
over $k$. One version of the Beilinson-Hodge conjecture that seems to survive scrutiny is the statement
that the Betti cycle class map $\cl_{r,m} : H_{\M}^{2r-m}(k(\X),\Q(r)) \to \hom_{\MHS}\big(\Q(0),H^{2r-m}(k(\X)(\C),\Q(r))\big)$
is surjective, that being equivalent to the Hodge conjecture in the case $m=0$. Now consider
a smooth and proper map $\rho : \X \to \Ss$ of smooth quasi-projective varieties over $k$, and where $\eta$ is 
the generic point of $\Ss$. We anticipate that the corresponding cycle class map is surjective, and provide some
evidence in support of this in the case where $\X = \Ss\times X$ is a product and $m=1$.
\end{abstract}

\maketitle

\section{Introduction}\label{S1}

The results of this paper are aimed at providing some evidence in support of an
affirmative answer to a question first formulated in~\cite[Question~1.1]{SJK-L},
now upgraded to the following:

\begin{conj}\label{MC}
Let $\rho : \X \to \Ss$ be a smooth proper map of smooth quasi-projective varieties over a
subfield $k = \ol{k}\subseteq \C$, with $\eta= \eta_{\Ss}$ the generic point of $\Ss/k$. Further, let $r,m \geq 0$
be integers.  Then
\[
\cl_{r,m} : \CH^r(\X_{\eta},m;\Q) = H_{\M}^{2r-m}(\X_{\eta},\Q(r)) \to \hom_{\MHS}\big(\Q(0),H^{2r-m}(\X_{\eta}(\C),\Q(r))\big),
\]
is surjective.
\end{conj}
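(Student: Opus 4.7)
The plan is to combine a Leray spectral sequence argument for $\rho$ with the classical (absolute) Beilinson-Hodge conjecture applied fibrewise, together with a spreading-out step to access the generic fibre. Given a Hodge class $\alpha \in \hom_{\MHS}\bigl(\Q(0), H^{2r-m}(\X_{\eta}(\C), \Q(r))\bigr)$, I would first spread it out: since $\X_{\eta} = \varprojlim_{U \subset \Ss} \rho^{-1}(U)$, continuity of Betti cohomology produces a nonempty Zariski open $U \subseteq \Ss$ and a Hodge class $\tilde{\alpha} \in \hom_{\MHS}\bigl(\Q(0), H^{2r-m}(\rho^{-1}(U)(\C), \Q(r))\bigr)$ restricting to $\alpha$. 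The goal is reduced to producing a class in $\CH^r(\rho^{-1}(U), m;\Q)$ whose image is $\tilde{\alpha}$, and then restricting to $\X_{\eta}$.

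Next, I would exploit the Leray spectral sequence
\[
E_2^{p,q} = H^p\bigl(U, R^q\rho_*\Q(r)\bigr) \Longrightarrow H^{p+q}(\rho^{-1}(U), \Q(r)).
\]
Since $\rho$ is smooth and proper, Deligne's theorem gives $E_2$-degeneration, and the Leray filtration on the abutment is by sub-MHS. Thus $\tilde{\alpha}$ splits into graded pieces, each a Hodge class in some $H^p(U, R^{2r-m-p}\rho_*\Q(r))$. The bottom piece ($p=0$) is a flat section of $R^{2r-m}\rho_*\Q(r)$ whose value at $\eta$ is a Hodge class on the geometric generic fibre, so the absolute Beilinson-Hodge conjecture for the fibre --- applied to a suitable specialisation --- yields, after further shrinking of $U$, a motivic lift whose restriction to $\X_{\eta}$ accounts for that piece.

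The substantive work lies in the higher Leray graded pieces ($p \geq 1$). In the special case $\X = \Ss \times X$ emphasised in the abstract, the Künneth formula splits these pieces as sums of tensors $H^p(U, \Q) \otimes H^q(X, \Q(r))$, so each Hodge class factors as an external product and the problem reduces to two independent questions: (i) an absolute Beilinson-Hodge problem for $\Ss_{\eta}$ in $\CH^{\bullet}(\Ss_{\eta}, m;\Q)$, and (ii) the Hodge conjecture (or its Beilinson-Hodge variant) on the smooth projective fibre $X$. For $m=1$ the base-side problem admits an explicit presentation via pairs $(D, f)$ of a divisor and a rational function, and this is presumably where the evidence advertised in the abstract is assembled, via an external product construction of motivic classes.

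The principal obstacle --- beyond the classical Beilinson-Hodge conjecture itself, which is already embedded in the $\Ss = \Spec(k)$ specialisation --- is the absence of a \emph{motivic} Leray decomposition in the general (non-product) case. Even assuming the absolute conjecture on every fibre and on the base, there is no formal device to glue motivic lifts of the graded pieces $E_{\infty}^{p, 2r-m-p}$ into a single class in $\CH^r(\X_{\eta}, m; \Q)$; doing so would seemingly require a triangulated category of motives over $\Ss$ in which one could prove an analogue of Deligne's degeneration theorem. This is a problem at least as deep as the Beilinson-Hodge conjecture, which is why I would not expect this approach to settle more than the product case at present.
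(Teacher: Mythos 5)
You cannot prove this statement, and neither does the paper: it is Conjecture~\ref{MC}, the organizing conjecture of the article, not a theorem. As the introduction notes, the case $\Ss = \Spec(k)$, $m=0$ is the classical Hodge conjecture, so any complete ``proof'' is out of reach; the paper only establishes special cases (Theorems~\ref{MT0},~\ref{MT1},~\ref{MT2} and Proposition~\ref{splitcase1}) for $\X = \Ss \times X$ with $m=1$, and even those require either a rigidity hypothesis on $H^2(\ol{S},\Q)$ or the conjunction of the Hodge conjecture, the Bloch--Beilinson injectivity conjecture, and Conjecture~\ref{C1}. To your credit, your proposal is candid about this: you correctly flag that the absolute conjecture is already embedded in the statement and that no formal device exists to glue motivic lifts of Leray graded pieces. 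So the ``gap'' is not an oversight on your part so much as the irreducible content of the conjecture itself.

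Comparing strategies where a comparison is meaningful: your Leray/K\"unneth splitting in the product case does match the paper's opening move (the decomposition~\eqref{E0E} and the term-by-term analysis in Proposition~\ref{splitcase1} and Theorem~\ref{MT0}), and your external-product construction of classes in $\CH^r(C\times X,1;\Q)$ from $\CH^1(C,1)\otimes\CH^{r-1}(X,0)$ is exactly how the paper handles the summand $H^1(C,\Q)\otimes H^{2r-2}(X,\Q)(r)$ via Lemma~\ref{SE}. What your sketch lacks is the paper's mechanism for the genuinely hard graded pieces: rather than seeking a motivic Leray decomposition, the paper localizes the problem via the exact sequence~\eqref{eq:2.2} and Proposition~\ref{KeyP}, reducing surjectivity of $\cl^{\eta}_{r,1}$ to showing that cycles in the Abel--Jacobi kernel $\K$ lie in $N^1_{\ol{\Ss}}\CH^r(\ol{\X};\Q)$. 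That containment is then attacked either by Abel's theorem on the curve factors together with a rigidity hypothesis excluding exotic sub-Hodge structures of $H^{2,0}\oplus H^{0,2}$ (Theorem~\ref{MT0}), or by pushing the cycle through Murre-type Chow--K\"unneth projectors and exploiting the support of those projectors plus the Bloch--Beilinson filtration (Theorems~\ref{MT1},~\ref{MT2}). If you want to turn your outline into the paper's actual partial results, that Abel--Jacobi/coniveau reduction is the missing ingredient.
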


Here 
\[
H^{2r-m}(\X_{\eta}(\C),\Q(r)) := \lim_{{\buildrel\to\over{U\subset \Ss/k}}}H^{2r-m}(\rho^{-1}(U)(\C),\Q(r)),
\]
is a limit of mixed Hodge structures (MHS), for which one should not expect finite
dimensionality, and for any smooth quasi-projective variety  $W/k$, we identify 
motivic cohomology $H^{2r-m}_{\M}(W,\Q(r))$ with Bloch's higher Chow group
$\CH^r(W,m;\Q) := \CH^r(W,m)\otimes \Q$ (see~\cite{Bl1}). Note that
if $\Ss = \Spec(k)$, and $m=0$, then $\X = X_k$ is smooth, projective over $k$.
Thus in this case Conjecture~\ref{MC}  reduces to the (classical) Hodge
conjecture.
The motivation for this conjecture stems from the following: 

\medskip
Firstly, it
is a generalization of similar a conjecture in~\cite[(\S1, statement (S3)]{dJ-L}, where $\X = \Ss$,
based on a generalization of the Hodge conjecture (classical form) to the
higher $K$-groups, and inspired in part by Beilinson's work in this direction.

\medskip
In passing,  we hope to instill in the reader  that any attempt to deduce Conjecture~\ref{MC}
from~\cite[\S1, statement (S3)]{dJ-L} seems to be hopelessly naive, and would
require some new technology. To move ahead with this, we eventually work in
the special situation where $\X = S\times X$ is a product, with $S = \Ss$ and
$X$ smooth projective, $m=1$, and employ some 
motivic input, based on reasonable pre-existing conjectures.

\medskip
Secondly, as a formal application of M. Saito's theory of mixed Hodge modules 
(see~\cite{A},~\cite{K-L}, ~\cite{SJK-L} and the references cited
there),  one could conceive of the following short exact sequence:
\begin{equation} \label{GM}
\begin{matrix} 0\\
&\\
\biggl\downarrow\\
&\\
{\Ext}^1_{\text{\rm P}\MHS}\big(\Q(0),H^{\nu-1}
(\eta_{\Ss},R^{2r-\nu-m}\rho_{\ast}\Q(r))\big)\\
\matrix {}_{\text{\rm Graded\ polar-}}\\
{}^{\text{\rm izable\ MHS}}\endmatrix^{\nearrow}\hskip2.6in\\
\biggl\downarrow\\
&\\
\left\{\begin{matrix} \text{\rm Germs\ of\ higher\ order}\\
\text{\rm generalized\ normal\ functions}
\end{matrix}\right\}\\
&\\
\biggl\downarrow\\
&\\
\hom_{\MHS}\big(\Q(0),H^{\nu}({\eta_{\Ss}},R^{2r-m-\nu}\rho_{\ast}\Q(r))\big)\\
&\\
\biggr\downarrow\\
&\\
0\end{matrix}
\end{equation}
(Warning: As mentioned earlier, passing to the generic point $\eta_{\Ss}$ of $\Ss$ is a limit process, which implies that
the spaces above need not be finite dimensional over $\Q$. This particularly applies to the case $m\geq 1$,
where there are residues.)
The key point is, is there lurking a generalized Poincar\'e
existence theorem for higher normal functions?  Namely, modulo the ``fixed part'' 
${\Ext}^1_{\text{\rm P}\MHS}\big(\Q(0),H^{\nu-1}
(\eta_{\Ss},R^{2r-\nu-m}\rho_{\ast}\Q(r))\big)$, are
these normal functions cycle-induced? 
In another direction, this diagram is related to
a geometric description of the
notion of a Bloch-Beilinson (BB)  filtration.
As a service to the reader, and to make sense of this all,
we elaborate on all  of this.

\medskip
\noindent
1. For the moment, let us replace $(\eta_{\Ss}$ by $\Ss$, $(\nu,m)$ by $(1,0)$ in diagram (\ref{GM}), 
and where $\Ss$ is  chosen to be a curve). Then
this diagram represents the schema of the original Griffiths program aimed at generalizing Lefschetz's famous $(1,1)$ theorem,
via normal functions.\footnote{Technically speaking, Griffiths worked with normal functions that extended to
the boundary $\ol{\Ss}\bs \Ss$, but let's not go there.} This program was aimed at solving the
Hodge conjecture inductively. Unfortunately, the lack of a Jacobi inversion theorem for the jacobian
of a general smooth projective variety involving a Hodge structure of weight $> 1$ led to limited
applications towards the Hodge conjecture. However the qualitative aspects of his program led
to the non-triviality of the now regarded Griffiths group. In that regard, the aforementioned diagram represents a 
generalization of this idea to the higher $K$-groups of $\X$ and the general fibers of $\rho :\X\to \Ss$.

\medskip
\noindent
2.  The notion of a BB filtration, first suggested by Bloch and later fortified by Beilinson, tells us 
that for any $X/k$ smooth projective and $r, m\geq 0$, there should be a descending filtration
\[
\big\{F^{\nu}\CH^r(X,m;\Q) \ \big|\ \nu  = 0,...,r\big\},
\]
whose graded pieces can be described in terms of extension datum, viz.,
\[
Gr_F^{\nu}\CH^r(X,m;\Q) \simeq \Ext^{\nu}_{\M\M}(\Spec(k),h^{2r-\nu-m}(X)(r)),
\]
where $\M\M$ is a conjectural category of mixed motives and $h^{\bullet}(X)(\bullet)$ is
motivic cohomology.\footnote{The original formulation involved
only the case $m=0$; this is just a natural extension of these ideas to the higher $K$-groups of $X$.}
Although there were many excellent candidate BB filtrations proposed by others over
the years,  a few are derived from the point of view of  ``spreads'',
in the case $k = \ol{\Q}$ (see~\cite{A},~\cite{Lew1}, ~\cite{GG})
as well as a conjectural description in terms of normal functions
(see~\cite{K-L}, \cite{Lew3}). Namely, if $X/\C$ is smooth and projective, then there
is a field $K$ of finite transcendence degree over $\ol{\Q}$ and a smooth and proper
spread $\X \xrightarrow{\rho} \Ss$ of smooth quasi-projective varieties over $\ol{\Q}$,
such that if $\eta$ is the generic point of $\Ss$, then $K$ can be identified with
$\ol{\Q}(\eta)$ via a suitable embedding $\ol{\Q}(\eta)\hookrightarrow \C$; moreover
with  respect to that embedding, $X/\C = \X_{\eta}\times_{\ol{\Q}(\eta)}\C$. Diagram
(\ref{GM}) then provides yet another schema of describing a candidate BB filtration
in terms of normal functions.

\medskip
As indicated  earlier, we focus our attention mainly on the case $m=1$ ($K_1$ case),
and provide some partial results in the case where $\X = S\times X$ is a product, with 
$S = \Ss$, and $X$ smooth projective. Our main results are Theorems~\ref{MT0},~\ref{MT1} and~\ref{MT2}.

\section{Notation} \label{NT}

\noindent
$\bullet_0$  Unless specified to the contrary, all varieties are defined over $\C$.

\medskip
\noindent
$\bullet_1$ $\Q(m)$ is the Tate twist with Hodge type $(-m,-m)$.

\medskip
\noindent
$\bullet_2$ For a mixed Hodge structure (MHS) $H$ over $\Q$, we put
$\Gamma(H) = \hom_{\MHS}(\Q(0),H)$ and 
$J(H) = \Ext^1_{\MHS}(\Q(0),H)$.

\medskip
\noindent
$\bullet_3$ The higher Chow groups $\CH^r(W,m)$ for a quasi-projective variety $W$ over a field $k$
are defined in~\cite{Bl1}. Let us assume $W/k$ is regular. An abridged definition 
of $\CH^r(W,1)$, viz., in the case $m=1$ is given by:
\[
\CH^r(W,1) = \frac{\ker \big( \sum_{{\rm cd}_WZ=r-1}^{Z\ {\rm irred}}(k(Z)^{\times},Z)\xrightarrow{\rm div}  z^r(W)\big)}{{\rm Image\ of\ tame\ symbol}},
\]
where $z^r(W)$ is the free abelian group generated by (irreducible) subvarieties of codimension $r$ in $W$;
moreover, the denominator admits this description.
 If $V\subset W$ is an irreducible subvariety of codimension $r-2$, and
$f,g \in k(V)^{\times}$, then the tame symbol is given as:
\[
T(\{f,g\}_V) = \sum_{{\rm cd}_VD=1}(-1)^{\nu_D(f)\nu_D(g)}\biggl(\frac{f^{\nu_D(g)}}{g^{\nu_D}(f)}\biggr)_D,
\]
as $D$ ranges through all irreducible codimension one subvarieties of $V$,  $\nu_D(\cdot )$ is the
order of a zero or pole, and $\big(\cdots \big)_D$ means the restriction to the generic point of $D$.
The ``Image of the tame symbol'' is the subgroup generated by $T(\{f,g\}_V)$, as $V$ ranges in $W$ and
$f,g$ range through $k(V)^{\times}$.

\medskip
\noindent
$\bullet_4$ Assume $W$ in $\bullet_3$ is also smooth of dimension $d_W$, and let $Z\subset W$ be irreducible of
codimension $r-1$, $f\in k(W)^{\times})$, where $k\subset \C$ is a subfield. Then the
Betti class  map 
\[
\cl_{r,1}: \CH^r(W,1;\Q) \to \Gamma\big(H^{2r-1}(W,\Q(r))\big)\subset H^{2r-1}(W,\Q(r)) \simeq
\]
\[
 \big[H^{2d_W-2r+1}_c(W,\Q(d_W-r))\big]^{\vee} \subset  \big[H^{2d_W-2r+1}_c(W,\C)\big]^{\vee}
\]
is induced by the current
\[
(Z,f) \mapsto \frac{1}{(2\pi{\rm {i})^{d_w-r+1}}}\int_Zd\log(f)\wedge \omega,\quad \{\omega\}\in H^{2d_W-2r+1}_c(W,\C).
\]

\section{What is known}\label{SN}

In this section, we summarize some of the results in~\cite{SJK-L}, where
$r\geq m = 1$.
The setting is the following diagram
\begin{equation*}
\xymatrix{
\X \ar@{^(->}[r] \ar[d]^{\rho} & \ol{\X} \ar[d]^{\ol{\rho}}\\
\Ss \ar@{^(->}[r] & \ol{\Ss}
}
\end{equation*}
where $\ol{\X}$ and $\ol{\Ss}$ are nonsingular complex 
projective varieties, $\ol{\rho}$ is a dominating flat morphism,
$D\subset \ol{\Ss}$ a divisor, $\Y:= \ol{\rho}^{-1}(D)$,
$\Ss := \ol{\Ss}\bs D$, $\X := \ol{\X}\bs \Y$ and $\rho := \ol{\rho}\big|_{\X}$.\footnote{While we assume that the base field is $\C$, the results here
are valid for varieties over an algebraically closed field $k \subset \C$.}
There is a short exact sequence
\begin{equation} \label{eq:2.2}
 0\to \frac{H^{2r-1}(\ol{\X},\Q(r))}{H_{\Y}^{2r-1}(\ol{\X},\Q(r))} 
 \to H^{2r-1}(\X,\Q(r)) 
 \to  H^{2r}_{\Y}(\ol{\X},\Q(r))^{\circ}\to 0,
\end{equation}
where with regard to the former term in~\eqref{eq:2.2}, 
$H_{\Y}^{2r-1}(\ol{\X},\Q(r))$ is identified with its image
in $H^{2r-1}(\ol{\X},\Q(r))$, and $H^{2r}_{\Y}(\ol{\X},\Q(r))^{\circ} := \ker \big(H^{2r}_{\Y}(\ol{\X},\Q(r))\to 
H^{2r}(\ol{\X},\Q(r))\big)$.
One has a corresponding  diagram 
\begin{equation} \label{eq:2.3}
\begin{split}
\xymatrix{
\ch^{r}(\X,1;\Q)\ar[r]\ar[d]^{\cl^{\X}_{r,1}}&\ch^{r}_{\Y}(\ol{\X};\Q)^{\circ}
\ar[r]^{\alpha_{\Y}} \ar[d]^{\beta_{\Y}}& \ch^{r}_{\hom}(\ol{\X};\Q) \ar[d]^{\ul{AJ}^{\ol{\X}}}\\
\Gamma\big( H^{2r-1}(\X,\Q(r))\big) \ar@{^(->}[r]&\Gamma\big(
H^{2r}_{\Y}(\ol{\X},\Q(r))^{\circ}\big) \ar[r]&J\biggl(\frac{H^{2r-1}(\ol{\X},\Q(r))}{H_{\Y}^{2r-1}(\ol{\X},\Q(r))}\biggr)
}
\end{split}
\end{equation}
well-known to commute
by an extension class argument, and where $\ul{AJ}^{\ol{\X}}$ is the corresponding ``reduced'' Abel-Jacobi map.
Further, the definition of  $\ch^{r}_{\Y}(\ol{\X};\Q)^{\circ}$ is the obvious one, being the cycles
in $\CH^{r-1}(\Y;\Q)$ that are homologous to zero on $\ol{\X}$. 

\begin{rmk} \label{KRK}
Poincar\'e duality gives an isomorphism of MHS:
\[
H^{2r}_{\Y}(\ol{\X},\Q(r)) \simeq H_{2\dim\ol{\X} -2r}({\Y},\Q(\dim\ol{\X} -r)).
\]
Thus $\ker \beta_{\Y} = \CH^{r-1}_{\hom}(\Y;\Q)$, viz.,  the subspace of cycles in $\CH^{r-1}(\Y;\Q)$
that are homologous to zero on $\Y$.
\end{rmk}

Let us
assume that $\beta_{\Y}$ is surjective, as is the case 
if the (classical) Hodge conjecture holds. 
If we apply the snake lemma, we arrive at 
\begin{equation*}
\coker(\cl^{\X}_{r,1}) \simeq 
\frac{\ker\biggl[ \ul{AJ}^{\ol{\X}}\big|_{\IM(\alpha_{\Y})}:
\IM(\alpha_{\Y}) \to J\biggl(\frac{H^{2r-1}(
\ol{\X},\Q(r))}{H_{\Y}^{2r-1}(\ol{\X},\Q(r))}\biggr)
\biggr]}{\alpha_{\Y}\big(\ker(\beta_{\Y})\big)}. 
\end{equation*}
Now take the limit over all $D\subset \ol{\Ss}$ to arrive at an
induced cycle map:
\begin{equation} \label{eq:2.4.1}
  \cl^{\eta}_{r,1} : \CH^{r}(\X_{\eta},1;\Q) \to \Gamma\big(H^{2r-1}(\X_{\eta},\Q(r))\big). 
\end{equation}
where $\eta$ is the generic point of $\ol{\Ss}$. We arrive at:
\begin{equation} \label{eq:2.5}
  \frac{\Gamma\big(H^{2r-1}(\X_{\eta},\Q(r))\big)}{\cl^{\eta}_{r,1}\big(
  \CH^{r}(\X_{\eta},1;\Q)\big)} \simeq\
  \frac{\ker\biggl[\K
  \stackrel{\ul{AJ}}{\to} J\biggl(
  \frac{H^{2r-1}(\ol{\X},\Q(r))}{N_{\ol{\Ss}}^{1}H^{2r-1}(
  \ol{\X},\Q(r))}\biggr)\biggr]}{N_{\ol{\Ss}}^{1}\CH^{r}(\ol{\X};\Q)},
\end{equation}
where $\K := \ker [\CH^{r}_{\hom}(\ol{\X};\Q)\to
\CH^{r}(\X_{\eta};\Q)]$,
$N_{\ol{\Ss}}^{q}\CH^{r}(\ol{\X}) \subset 
\CH_{\hom}^{r}(\ol{\X};\Q)$ is the subgroup
generated by cycles which are homologous to zero on some
codimension $q$ subscheme of $\ol{\X}$ obtained from a  (pure) codimension
$q$ subscheme of $\ol{S}$ via $\ol{\rho}^{-1}$ (keep in mind Remark~\ref{KRK}, in the
case $q=1$),
and $N_{\ol{S}}^{q}H^{2r-1}(\ol{\X},\Q(r))$ is the subspace of the
coniveau $N^qH^{2r-1}(\ol{\X},\Q(r))$ arising from $q$
codimensional subschemes
of $\ol{S}$ via $\ol{\rho}^{-1}$. A relatively simple argument, found in~\cite{SJK-L}, yields
the following:

\begin{prop} \label{KeyP}
Under the assumption of the Hodge conjecture,  \eqref{eq:2.5} becomes:
\[
\frac{\Gamma\big(H^{2r-1}(\X_{\eta},\Q(r))\big)}{\cl^{\eta}_{r,1}\big(
\CH^{r}(\X_{\eta},1;\Q)\big)} \simeq\
\frac{N_{\ol{\Ss}}^{1}\CH^{r}(\ol{\X};\Q)\ +\ \ker\big[\K \stackrel{AJ}{\to} J\big(
H^{2r-1}(\ol{\X},\Q(r))\big)\big]}{N_{\ol{\Ss}}^{1}\CH^{r}(\ol{\X};\Q)}.
\]
\end{prop}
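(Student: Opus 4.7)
My plan is to derive the equality of subgroups of $\K$
\[
\ker\bigl[\K \xrightarrow{\ul{AJ}} J(H/N^{1}H)\bigr] \;=\; N^{1}_{\ol{\Ss}}\CH^{r}(\ol{\X};\Q) \,+\, \ker\bigl[\K \xrightarrow{AJ} J(H)\bigr],
\]
with $H := H^{2r-1}(\ol{\X},\Q(r))$; dividing by $N^{1}_{\ol{\Ss}}\CH^{r}(\ol{\X};\Q)$ and combining with \eqref{eq:2.5} yields the stated isomorphism.

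First I would apply $\Ext^{\bullet}_{\MHS}(\Q(0),-)$ to the short exact sequence of MHS $0 \to N^{1}H \to H \to H/N^{1}H \to 0$. Because $\ol{\X}$ is smooth projective, all three terms are pure of weight $-1$; hence $\Gamma(\cdot)=\hom_{\MHS}(\Q(0),\cdot)$ vanishes on each, and combined with Beilinson's vanishing $\Ext^{2}_{\MHS}(\Q(0),-)=0$ for polarizable Hodge structures this produces
\[
0 \to J(N^{1}H) \hookrightarrow J(H) \xrightarrow{\pi} J(H/N^{1}H) \to 0,
\]
so that, identifying $J(N^{1}H)$ with its image in $J(H)$, one has $\ker[\K \to J(H/N^{1}H)] = AJ^{-1}(J(N^{1}H))$ inside $\K$.

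The inclusion $\supseteq$ is immediate: $\ker AJ|_{\K}$ maps tautologically to zero, while any $\zeta = \alpha_{\Y}(\tilde{\tau})\in N^{1}_{\ol{\Ss}}\CH^{r}$ arising from $\tilde{\tau}\in\CH^{r-1}_{\hom}(\widetilde{\Y};\Q)$ (for a desingularization $\widetilde{\Y}\to\Y=\ol{\rho}^{-1}(D)$) satisfies $AJ(\zeta)=i_{*}AJ^{\widetilde{\Y}}(\tilde{\tau})$, which lies in the Gysin image of $J(H^{2r-3}(\widetilde{\Y},\Q(r-1)))$ in $J(H)$, hence in $J(N^{1}H)$. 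For the reverse inclusion $\subseteq$, it suffices to show that in the limit over $D$, the restriction of $AJ$ to $N^{1}_{\ol{\Ss}}\CH^{r}$ surjects onto $J(N^{1}H) \subseteq J(H)$: granting this, any $\xi$ with $\ul{AJ}(\xi)=0$ admits some $\zeta\in N^{1}_{\ol{\Ss}}\CH^{r}$ with $AJ(\zeta)=AJ(\xi)$, and then $\xi = \zeta + (\xi-\zeta)$ with $\xi-\zeta\in\ker AJ|_{\K}$. To produce such a $\zeta$ I would enlarge $D$ so that $\xi$ lies in the image of $\alpha_{\Y}$ and $AJ(\xi)\in J(N^{1}_{\Y}H)$, then invoke the snake lemma on \eqref{eq:2.3} at level $\Y$, which is valid under the Hodge conjecture since $\beta_{\Y}$ is then surjective: lifts $\tilde{\sigma}\in\CH^{r}_{\Y}(\ol{\X};\Q)^{\circ}$ of $\xi$ may be translated by elements of $\ker\beta_{\Y}=\CH^{r-1}_{\hom}(\Y;\Q)$, shifting $\xi$ through $\alpha_{\Y}(\ker\beta_{\Y})=N^{1}_{\Y}\CH^{r}$, and the goal is to arrange a shift whose Abel--Jacobi value matches $AJ(\xi)$, possibly after further enlarging $\Y$.

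The chief obstacle is precisely this surjectivity of $AJ|_{N^{1}_{\ol{\Ss}}\CH^{r}}$. For $r=2$ the target $J(N^{1}_{\Y}H)$ carries only Hodge types $(-1,0)$ and $(0,-1)$ and is realized as a quotient of $\Pic^{0}(\widetilde{\Y})$, so classical Abel--Jacobi surjectivity onto the Picard variety is enough; for $r\geq 3$, higher Hodge types appear in $J(N^{1}_{\Y}H)$, and the Hodge conjecture applied to $\widetilde{\Y}$ must be used more substantively to generate enough cycles realizing arbitrary prescribed elements. This is the precise point at which the Hodge conjecture enters the argument beyond merely guaranteeing $\beta_{\Y}$ is surjective.
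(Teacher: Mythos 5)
Note first that the paper does not actually reprove this proposition: it quotes it from \cite{SJK-L} (``a relatively simple argument, found in [SJK-L]''), so your attempt has to be measured against the argument there. Your reduction is the right one: everything comes down to the identity $\ker\bigl[\K\to J(H/N^{1}H)\bigr]=N^{1}_{\ol{\Ss}}\CH^{r}(\ol{\X};\Q)+\ker\bigl[AJ|_{\K}\bigr]$ with $H=H^{2r-1}(\ol{\X},\Q(r))$, your exact sequence of Jacobians is correct, and the inclusion $\supseteq$ is fine. The gap is in the forward inclusion. You reduce it to the surjectivity of $AJ$ restricted to $N^{1}_{\ol{\Ss}}\CH^{r}(\ol{\X};\Q)$ onto $J(N^{1}H)$, and you concede you can only establish this for $r=2$. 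This is not a repairable defect of exposition but a wrong turn: for $r\ge 3$ the Hodge structure $N^{1}_{\Y}H\simeq\iota_{*}H^{2r-3}(\widetilde{\Y},\Q(r-1))$ (with $\iota:\widetilde{\Y}\to\ol{\X}$ a desingularization of $\Y$) generally carries Hodge types beyond $(-1,0)$ and $(0,-1)$, and by Griffiths the Abel--Jacobi image in such an intermediate Jacobian is typically a thin subgroup modulo the algebraic part. The Hodge conjecture is a statement about $\Gamma(\cdot)$, not about hitting prescribed elements of $J(\cdot)$ by cycles, so ``using HC more substantively to generate enough cycles'' cannot close this. Moreover, full surjectivity is far more than the identity requires; what is needed is only $AJ(\K)\cap J(N^{1}H)\subseteq AJ\bigl(N^{1}_{\ol{\Ss}}\CH^{r}(\ol{\X};\Q)\bigr)$.

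The mechanism that actually works (and is how HC enters in \cite{SJK-L}) is to split the coniveau piece \emph{algebraically} rather than to invert $AJ$. Fix $\Y=\ol{\rho}^{-1}(D)$ large enough that $\xi$ is supported on $\Y$ and $AJ(\xi)\in J(L_{\Y})$, where $L_{\Y}:=\im\bigl[H^{2r-1}_{\Y}(\ol{\X},\Q(r))\to H\bigr]=\iota_{*}H^{2r-3}(\widetilde{\Y},\Q(r-1))$. By semisimplicity of polarizable Hodge structures and the Hodge conjecture applied to $\ol{\X}\times\widetilde{\Y}$, there is a correspondence $\Gamma_{1}\in\CH^{\bullet}(\ol{\X}\times\widetilde{\Y};\Q)$ such that $\iota_{*}\circ[\Gamma_{1}]_{*}$ is the projector of $H$ onto the direct summand $L_{\Y}$; set $P:=\Gamma_{\iota}\circ\Gamma_{1}$, with $\Gamma_{\iota}$ the graph of $\iota$. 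Then $[P]_{*}$ is the identity on $J(L_{\Y})$, and since $[\xi]=0$ in $H^{2r}(\ol{\X},\Q(r))$ one gets $[\Gamma_{1,*}\xi]=0$ in $H^{2r-2}(\widetilde{\Y},\Q(r-1))$, so $P_{*}\xi=\iota_{*}(\Gamma_{1,*}\xi)$ is the pushforward of a null-homologous cycle on $\widetilde{\Y}$ and hence lies in $N^{1}_{\ol{\Ss}}\CH^{r}(\ol{\X};\Q)$; meanwhile $AJ(\xi-P_{*}\xi)=AJ(\xi)-[P]_{*}AJ(\xi)=0$ because $AJ(\xi)\in J(L_{\Y})$. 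Thus $\xi=P_{*}\xi+(\xi-P_{*}\xi)$ gives the decomposition, with no Abel--Jacobi surjectivity invoked anywhere. I recommend replacing your final two paragraphs with this correspondence argument.
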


\begin{ex}\label{example:2.7} 
Suppose that $\ol{\X} = \ol{\Ss}$ with $\ol{\rho}$ 
the identity. In this case Proposition~\ref{KeyP} becomes:
\begin{equation}\label{eq:2.7}
\frac{\Gamma\big(H^{2r-1}(\C(\ol{\X}),\Q(r))\big)}{\cl_{r,1}\big(
\CH^{r}(\Spec(\C(\ol{\X})),1;\Q)\big)}\  \simeq\
\frac{N^{1}\CH^{r}(\ol{\X};\Q)\ +\  
\CH_{AJ}^r(\ol{\X};\Q)}{N^{1}\CH^{r}(\ol{\X};\Q)},
\end{equation}
where $N^1\CH^r(\ol{\X};\Q)$ is the subgroup of cycles, that are homologous to
zero on codimension $1$ subschemes of $\ol{\X}$, and $\CH_{AJ}^r(\ol{\X};\Q)$ are
cycles in the kernel of the Abel-Jacobi map $AJ : \CH^r(\ol{\X};\Q)
\to J\big(H^{2r-1}(\ol{\X},\Q(r))\big)$. According to Jannsen~\cite[p. 227]{Ja1},
there is a discussion that strongly hints  that the right
hand side of \eqref{eq:2.7} should be  zero. In light of~\cite{Lew2}, we conjecturally believe this to
be true. In particular, 
since $\Spec(\C(\ol{\X}))$ is
a point, this implies that $\Gamma\big(H^{2r-1}(\C(\ol{\X}),\Q(r))\big) 
= 0$ for $r>1$. The reader can easily check that 
$$
\cl_{r,1}\big(\CH^{r}(\Spec(\C(\ol{\X})),1;\Q)\big) = 
\Gamma\big(H^{2r-1}(\C(\ol{\X}),\Q(r))\big),
$$
holds unconditionally in the case $r=\dim \ol{\X}$, that  being well known in
the case $r = \dim \X = 1$, and for $r = \dim \X > 1$, from the weak Lefschetz theorem
for affine varieties.
\end{ex}

\begin{ex}\cite{SJK-L} Here we give some evidence that
the RHS (hence LHS) of Proposition~\ref{KeyP} is zero. 
Suppose $\ol{\X} = X\times \ol{S}$,  ($\ol{S} := \ol{\Ss}$), and let us
assume the condition 
$$
\CH^{r}(\ol{\X};\Q) = \bigoplus_{\ell=0}^r\ \CH^{\ell}(\ol{S};\Q) \otimes\CH^{r-\ell}(X;\Q).
$$
An example
situation is when $\ol{S}$ is a flag variety, 
such as a projective space; however conjecturally
speaking, this condition is expected to hold for
a much broader class of examples. If we further assume the Hodge conjecture,
then as a consequence of Proposition~\ref{KeyP},
we arrive at:

\begin{cor} Under the assumptions on the K\"unneth condition above and the Hodge conjecture, with 
$\ol{\X} = \ol{S}\times X$,  if
$$
\cl_{r,1}\big(\CH^{r}(\Spec(\C(\ol{S})),1;\Q)\big) = 
\Gamma\big(H^{2r-1}(\C(\ol{S}),\Q(r))\big),
$$
holds, then the map 
$\cl_{r,1}^{\eta} $ in \eqref{eq:2.4.1} is surjective.
\end{cor}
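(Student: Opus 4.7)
By Proposition~\ref{KeyP}, the surjectivity of $\cl_{r,1}^{\eta}$ reduces to the containment $\ker\bigl[\K \xrightarrow{AJ} J(H^{2r-1}(\ol{\X},\Q(r)))\bigr] \subseteq N^{1}_{\ol{\Ss}}\CH^{r}(\ol{\X};\Q)$. I fix $\xi \in \K$ with $AJ(\xi)=0$ and show $\xi \in N^{1}_{\ol{\Ss}}\CH^{r}(\ol{\X};\Q)$. The plan is to exploit the K\"unneth hypothesis at three compatible levels (Chow groups, Betti cohomology, and Abel--Jacobi), and then invoke the Hodge conjecture on $X$ together with the given hypothesis on $\Spec\C(\ol{S})$. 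First, by K\"unneth, write $\xi=\sum_{\ell}\xi_{\ell}$ with $\xi_{\ell}\in\CH^{\ell}(\ol{S};\Q)\otimes\CH^{r-\ell}(X;\Q)$. Only $\xi_{0}$ has nontrivial restriction to $\X_{\eta}$; because $\CH^{r}(X;\Q)\hookrightarrow\CH^{r}(X_{\C(\ol{S})};\Q)$ via specialization at a $\C$-point of $\ol{S}$, the hypothesis $\xi|_{\X_{\eta}}=0$ forces $\xi_{0}=0$. The K\"unneth decomposition of $H^{2r}(\ol{\X},\Q(r))$ further splits $\cl(\xi)=0$ into $\cl(\xi_{\ell})=0$ for each $\ell\geq 1$.

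Next, for each $\ell\geq 1$ I decompose $\xi_{\ell}=\zeta_{\ell}+\eta_{\ell}$ with $\zeta_{\ell}\in\CH^{\ell}(\ol{S};\Q)\otimes\CH^{r-\ell}_{\hom}(X;\Q)$ and $\eta_{\ell}\in\CH^{\ell}_{\hom}(\ol{S};\Q)\otimes\CH^{r-\ell}(X;\Q)$, by picking representative cycles $\tilde a_{j}\in\CH^{\ell}(\ol{S};\Q)$ for a basis of $\mathrm{span}\{\cl^{\ol{S}}(a_{i})\}\subseteq H^{2\ell}(\ol{S},\Q(\ell))$ and subtracting. The piece $\zeta_{\ell}$ lies in $N^{1}_{\ol{\Ss}}\CH^{r}(\ol{\X};\Q)$ automatically: choosing a divisor $D\subset\ol{S}$ containing the projected support, K\"unneth on $D\times X$ places the cycle class of $\zeta_{\ell}$ in $H^{2(\ell-1)}(D,\Q(\ell-1))\otimes H^{2(r-\ell)}(X,\Q(r-\ell))$, whose second factor vanishes because each $X$-factor entering $\zeta_{\ell}$ is homologous to zero.

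The heart of the argument is absorbing each $\eta_{\ell}$ into $N^{1}_{\ol{\Ss}}\CH^{r}(\ol{\X};\Q)$. Compatibility of the Abel--Jacobi map with K\"unneth identifies the relevant component of $AJ(\eta_{\ell})$ with $\sum_{i}AJ^{\ol{S}}(\alpha_{i})\otimes\cl^{X}(b_{i})\in J(H^{2\ell-1}(\ol{S},\Q(\ell)))\otimes H^{2(r-\ell)}(X,\Q(r-\ell))$, and the hypothesis $AJ(\xi)=0$ forces this to vanish in each K\"unneth component independently. For $\ell=1$ the argument is clean, since $AJ^{\ol{S}}\colon\Pic^{0}(\ol{S})_{\Q}\xrightarrow{\sim} J(H^{1}(\ol{S},\Q(1)))$ is a $\Q$-isomorphism: the vanishing gives $\sum\alpha_{i}\otimes\cl^{X}(b_{i})=0$ in $\Pic^{0}(\ol{S})_{\Q}\otimes H^{2r-2}(X,\Q(r-1))$, and after choosing an algebraic basis of $\mathrm{span}\{\cl^{X}(b_{i})\}$ using the Hodge conjecture on $X$, a regrouping expresses $\eta_{1}$ as the sum of a rationally trivial $\CH$-class and a class of $\zeta$-type already handled. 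For $\ell\geq 2$ one invokes the corollary's hypothesis on $\cl_{r,1}$ of $\Spec\C(\ol{S})$: by Example~\ref{example:2.7} applied to $\ol{S}$ (under the Hodge conjecture), it translates into the containment $\CH^{\bullet}_{AJ}(\ol{S};\Q)\subseteq N^{1}\CH^{\bullet}(\ol{S};\Q)$ at the relevant codimensions, and combined with the Hodge conjecture on $X$ this places $\eta_{\ell}$ in $N^{1}_{\ol{\Ss}}\CH^{r}(\ol{\X};\Q)$. The main obstacle is precisely this last step for $\ell\geq 2$: the simultaneous coordination of the Hodge conjecture on $X$, the K\"unneth compatibility of $AJ$, and the spread/limit translation of Example~\ref{example:2.7} for $\ol{S}$ is what turns a vanishing in $J$-spaces into the required cycle-theoretic containment.
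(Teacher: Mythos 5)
The paper gives no written proof of this corollary --- it is simply asserted as a consequence of Proposition~\ref{KeyP} under the K\"unneth hypothesis --- so your reconstruction has to be judged on its own merits, and in outline it is surely the intended argument: reduce via Proposition~\ref{KeyP} to showing $\ker\bigl[AJ|_{\K}\bigr]\subseteq N^1_{\ol{S}}\CH^r(\ol{S}\times X;\Q)$, split $\xi$ along the assumed Chow--K\"unneth decomposition, kill $\xi_0$ by specialization at a $\C$-point, absorb the pieces with homologically trivial $X$-factor into $N^1_{\ol{S}}$ (they are supported on, and null-homologous on, some $D\times X$), and use the displayed hypothesis --- translated through \eqref{eq:2.7} into $\CH^{\bullet}_{AJ}(\ol{S};\Q)\subseteq N^1\CH^{\bullet}(\ol{S};\Q)$ --- to handle the pieces with homologically trivial $\ol{S}$-factor. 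The separation of $AJ$ along K\"unneth components, and the step from $\sum_k AJ(\gamma_k)\otimes w_k=0$ to $AJ(\gamma_k)=0$ (using that $\mathrm{span}\{w_k\}$ is a trivial sub-Hodge structure, hence a direct summand by semisimplicity, so that $J(-)\otimes\mathrm{span}\{w_k\}$ splits off), are both correct.

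Two points need tightening. The substantive one: your last step requires $\CH^{\ell}_{AJ}(\ol{S};\Q)\subseteq N^1\CH^{\ell}(\ol{S};\Q)$ for \emph{every} $2\le\ell\le r$, whereas the hypothesis as literally displayed, pushed through \eqref{eq:2.7}, yields this only in codimension $r$. There is no general reduction from codimension $r$ to codimension $\ell<r$, so you must either read the hypothesis as quantified over all codimensions up to $r$ (almost certainly the authors' intent, and how you should state it) or restrict the corollary to $r\le 2$, where only $\ell=r$ is in play. The minor one: the Hodge conjecture is not what lets you choose an algebraic basis of $\mathrm{span}\{\cl^X(b_i)\}$ --- those classes are already algebraic, so a basis can be taken among $\Q$-combinations of the $b_i$ themselves; where the Hodge conjecture genuinely enters is in Proposition~\ref{KeyP} (surjectivity of $\beta_{\Y}$) and in the equivalence \eqref{eq:2.7}. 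Also, your appeal to ``K\"unneth on $D\times X$'' should be phrased in Borel--Moore homology, since $D$ need not be smooth; the conclusion $[\zeta_\ell]=0$ in $H_{*}(D\times X,\Q)$ is unaffected because the $X$-factors carry the vanishing.
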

\end{ex}

\section{The split case and rigidity}\label{S2}

\subsection{Base a curve}
In this section, we observe that the Beilinson-Hodge conjecture  (Conjecture~\ref{MC}), in the special case of a split projection with base given by a curve, holds under the assumption of the Hodge conjecture on the fibre. Let $X$ be a smooth projective variety and $C$ a smooth curve. Let $\pi: C \times X \rightarrow C$ denote the projection morphism.

\begin{prop}\label{splitcase1}
Let $X$ and $C$ be as above. 

\begin{enumerate}
\item
 If $m > 1$, then $\CH^{r}(C \times X, m) \rightarrow \Gamma(H^{2r-m}(C \times X, \Q(r)))$
is surjective.

\item
If $m =1$, then $\CH^{r}(C \times X, m) \rightarrow \Gamma(H^{2r-m}(C \times X, \Q(r)))$ is surjective
if the Hodge conjecture holds for $X$ in codimension $r-1$.
\end{enumerate}

In particular, the Beilinson Hodge conjecture (Conjecture~\ref{MC}) for $\pi: C \times X \rightarrow C$
holds unconditionally if $m >  1$ and, if $m = 1$, then it holds under the assumption of the Hodge conjecture for $X$. 
 \end{prop}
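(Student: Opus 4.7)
The plan is a K\"unneth reduction combined with a weight count. Since $C$ is a smooth curve, $H^i(C,\Q) = 0$ for $i \notin \{0,1,2\}$, and the K\"unneth formula gives as mixed Hodge structures
$$H^{2r-m}(C \times X, \Q(r)) = \bigoplus_{i=0}^{2} H^i(C,\Q) \otimes H^{2r-m-i}(X,\Q(r)).$$
Applying $\Gamma$ and using that $H^{j}(X,\Q(r))$ is pure of weight $j-2r$ (since $X$ is smooth projective), together with the fact that the weights of $H^i(C,\Q)$ lie in $\{0\}$ for $i=0$, $\{1,2\}$ for $i=1$, and $\{2\}$ for $i=2$ (with the weight-$2$ piece of $H^1$ coming from the residue contribution at the boundary $\Delta = \ol{C}\setminus C$, and $H^2(C,\Q)=0$ when $C$ is non-projective), one checks directly that a summand can have nonzero $\Gamma$ only when $m+i$ is a weight actually occurring in $H^i(C,\Q)$. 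This forces $m \in \{0,1\}$, and in particular for $m \ge 2$ the target $\Gamma$ vanishes outright, proving (1).

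For $m=1$, the only possibly nonzero summand is $\Gamma\big(\gr^W_2 H^1(C,\Q) \otimes H^{2r-2}(X,\Q(r))\big)$. The piece $\gr^W_2 H^1(C,\Q)$ is Hodge-Tate; its twist $\gr^W_2 H^1(C,\Q)(1)$ is a direct sum of copies of $\Q(0)$ (of rank $|\Delta|-1$ for $\ol{C}$ connected), and vanishes entirely when $C$ is projective, in which case the proposition is trivial. Consequently
$$\Gamma\big(H^{2r-1}(C \times X, \Q(r))\big) \ \cong\ \gr^W_2 H^1(C,\Q)(1) \otimes_{\Q} \Gamma\big(H^{2r-2}(X, \Q(r-1))\big).$$

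Next I would invoke the external product in higher Chow theory,
$$\CH^1(C,1;\Q) \otimes \CH^{r-1}(X;\Q) \longrightarrow \CH^r(C \times X, 1; \Q),$$
which is compatible with the Betti cycle class map via the K\"unneth formula, so surjectivity onto $\Gamma$ reduces to the two factorwise claims. On the curve side, $\CH^1(C,1;\Q) \to \Gamma(H^1(C,\Q(1)))$ is unconditionally surjective: by $\bullet_3$, $\CH^1(C,1)$ is identified with $\CO(C)^{\times}$ and the cycle map sends $f$ to the class of $\tfrac{1}{2\pi i}\,d\log f$; meanwhile $\Gamma(H^1(C,\Q(1)))$ is identified, via the residue sequence and the Abel-Jacobi map into $J(H^1(\ol{C},\Q(1)))$, with the $\Q$-span of principal divisors on $\ol{C}$ supported in $\Delta$, and surjectivity becomes tautological. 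On the $X$ side, $\CH^{r-1}(X;\Q) \to \Gamma(H^{2r-2}(X, \Q(r-1)))$ is surjective precisely under the Hodge conjecture for $X$ in codimension $r-1$. This yields (2); the ``in particular'' clause then follows by taking the direct limit over dense opens $U \subset C$, since filtered colimits preserve surjectivity.

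The only nontrivial verification is the compatibility of the external product in higher Chow theory with the K\"unneth decomposition in Betti cohomology under the cycle class map; once that standard fact is in hand, the rest is weight bookkeeping together with the two aforementioned surjectivity inputs — the elementary residue calculation on the curve, and the Hodge conjecture hypothesis on $X$.
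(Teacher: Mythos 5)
Your overall strategy --- K\"unneth decomposition, weight vanishing for $m>1$, the external product $\CH^1(C,1)\otimes\CH^{r-1}(X)\to\CH^r(C\times X,1)$ compatible with the cycle class maps, the two factorwise surjectivities, and the limit over opens $U\subset C$ --- is the same as the paper's, and part (1), the curve-side and $X$-side inputs, and the final limit argument are all fine. But there is a genuine gap in the $m=1$ case: your displayed identification
\[
\Gamma\big(H^{2r-1}(C\times X,\Q(r))\big)\ \cong\ \gr^W_2H^1(C,\Q)(1)\otimes_{\Q}\Gamma\big(H^{2r-2}(X,\Q(r-1))\big)
\]
is false in general. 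The weight argument only gives an \emph{injection} of $\Gamma\big(H^1(C,\Q)\otimes H^{2r-2}(X,\Q)(r)\big)$ into $\gr^W_2H^1(C,\Q)(1)\otimes\Gamma\big(H^{2r-2}(X,\Q(r-1))\big)$; the image is cut out by an Abel--Jacobi obstruction coming from the extension $0\to H^1(\ol{C},\Q(1))\to H^1(C,\Q(1))\to \gr^W_0H^1(C,\Q(1))\to 0$, whose class is carried by the degree-zero divisors supported on $\Delta$ in $J\big(H^1(\ol{C},\Q(1))\big)$. For instance, take $\ol{C}$ an elliptic curve, $\Delta=\{P,Q\}$ with $[P]-[Q]$ non-torsion, $X$ a point and $r=1$: the right-hand side is $\Q$ while $\Gamma\big(H^1(C,\Q(1))\big)=0$. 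You in fact work with the correct, smaller group $\Gamma\big(H^1(C,\Q(1))\big)$ when you treat the curve factor, so your argument as written proves surjectivity onto $\Gamma\big(H^1(C,\Q(1))\big)\otimes\Gamma\big(H^{2r-2}(X,\Q(r-1))\big)$, but never shows that this subspace exhausts $\Gamma$ of the K\"unneth summand.

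The missing ingredient is exactly Lemma~\ref{SE} of the paper:
\[
\Gamma\big(H^1(C,\Q)\otimes H^{2r-2}(X,\Q)(r)\big)=\Gamma\big(H^1(C,\Q(1))\big)\otimes\Gamma\big(H^{2r-2}(X,\Q(r-1))\big),
\]
and its proof is not pure weight bookkeeping. Writing $H=H^{2r-2}(X,\Q(r-1))$ and $W_j=W_jH^1(C,\Q(1))$, one compares the $\Gamma$--$J$ exact sequences attached to $0\to W_{-1}\otimes H\to W_0\otimes H\to \gr_0^W\otimes H\to 0$ and to the same sequence with $H$ replaced by $\Gamma(H)$. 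The key point is that, by semi-simplicity of polarizable Hodge structures, the inclusion $W_{-1}\otimes\Gamma(H)\hookrightarrow W_{-1}\otimes H$ is split, hence $J\big(W_{-1}\otimes\Gamma(H)\big)\to J\big(W_{-1}\otimes H\big)$ is injective, and the snake lemma then identifies the two kernels. You need to supply this step (or an equivalent one) before the factorwise reduction is legitimate.
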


We begin with some preliminary reductions.  By the K\"unneth decomposition we can identify $ H^{2r-m}(C\times X,\Q) $ with
\begin{equation*}
 \bigoplus_{i=0}^2 H^i(C,\Q)\otimes H^{2r-m-i}(X,\Q)
\,.
\end{equation*}
For $ i = 0 $ and 2, $ H^i(C,\Q)\otimes H^{2r-m-i}(X,\Q)(r) $
is pure of weight $ -m $ by the purity of 
$ H^{2r-m-i}(X,\Q) $ and that of $ H^0(C,\Q) $ as well as $ H^2(C,\Q) $.
Therefore $ \Gamma(H^i(C,\Q)\otimes H^{2r-m-i}(X,\Q)(r) ) = 0 $ for $m > 0$
and $ i \ne 1 $. The same also holds for $ i=1 $ if $ C $ is projective.
But in general we have the following.

\begin{lemma}\label{SE}
With notation and assumptions as above,
\begin{equation*} 
\Gamma \big(H^1(C,\Q)\otimes H^{2r-2}(X,\Q)(r)\big) =  \Gamma (H^1(C,\Q(1))) \otimes \Gamma(H^{2r-2}(X,\Q(r-1))).
\end{equation*}
\end{lemma}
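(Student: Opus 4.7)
The plan is to use the weight filtration on $H^1(C,\Q(1))$ to reduce the computation of $\Gamma$ to a question about the category of polarizable pure Hodge structures, which is semisimple. Write $A = H^1(C,\Q(1))$ and $B = H^{2r-2}(X,\Q(r-1))$; since $X$ is smooth projective, $B$ is pure of weight $0$ and polarizable. For the smooth curve $C$ with smooth compactification $\ol{C}$ and puncture set $P = \ol{C}\setminus C$, the Gysin/localization sequence exhibits $A$ as a MHS sitting in a short exact sequence
\[
0 \to M \to A \to V \to 0,
\]
where $M = H^1(\ol{C},\Q(1))$ is pure of weight $-1$ and polarizable, and $V = \mathrm{Gr}^W_0 A$ is pure of type $(0,0)$, hence abstractly isomorphic to $\Q(0)^n$ for some $n$.

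Tensoring with $B$ stays exact, and applying $\Gamma$ gives
\[
0 \to \Gamma(A\otimes B) \to \Gamma(V\otimes B) \xrightarrow{\,\partial\,} J(M\otimes B),
\]
since $\Gamma(M\otimes B)=0$ by purity (weight $-1$). Because $V$ is a trivial MHS, $\Gamma(V\otimes B) = V\otimes \Gamma(B)$; similarly, applying $\Gamma$ to the untensored sequence identifies $\Gamma(A) = \ker\bigl(\delta\colon V \to J(M)\bigr)$, where $\delta$ is the connecting map. The central assertion, which I would verify by naturality of the Yoneda connecting map under tensoring with a morphism $\Q(0)\to B$, is that $\partial$ factors as
\[
V\otimes \Gamma(B) \xrightarrow{\,\delta\otimes \mathrm{id}\,} J(M)\otimes \Gamma(B) \longrightarrow J(M\otimes B),
\]
the second arrow being induced by the inclusion $\Gamma(B)\hookrightarrow B$ (and using $J(M)\otimes \Gamma(B) = J(M\otimes \Gamma(B))$, which holds because $\Gamma(B)$ is a trivial MHS).

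The key remaining input is the injectivity of $J(M\otimes \Gamma(B)) \to J(M\otimes B)$. Here I would invoke semisimplicity of the category of polarizable pure Hodge structures of weight $0$: the sub-MHS $\Gamma(B)\subset B$ (which is a direct sum of copies of $\Q(0)$) has a complementary polarizable sub-HS $B'$ with $\Gamma(B')=0$, so $B = \Gamma(B)\oplus B'$. Tensoring with $M$ and applying $J$ decomposes $J(M\otimes B) = \bigl(J(M)\otimes \Gamma(B)\bigr)\oplus J(M\otimes B')$, so the map in question is the inclusion of a direct summand. Combining these steps yields
\[
\Gamma(A\otimes B) = \ker\partial = \ker(\delta\otimes \mathrm{id}) = \ker(\delta)\otimes \Gamma(B) = \Gamma(A)\otimes \Gamma(B),
\]
which is the lemma.

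The main obstacle I anticipate is the factorisation of $\partial$: this is a routine but slightly fiddly check that the connecting map of the long exact $\Ext$-sequence commutes with tensoring by a fixed MHS, together with identifying the Yoneda class of $A\otimes B$ over $V\otimes B$ with $[A]\otimes \mathrm{id}_B$. Everything else (semisimplicity, purity vanishing, and the $V\cong \Q(0)^n$ identifications) is standard.
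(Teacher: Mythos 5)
Your argument is correct and is essentially the paper's own proof: both rest on the weight/Gysin sequence $0 \to H^1(\ol{C},\Q(1)) \to H^1(C,\Q(1)) \to Gr_0^W \to 0$, the vanishing of $\Gamma$ on the weight $-1$ part, the triviality of $Gr_0^W$, and semisimplicity of polarizable Hodge structures to split off $\Gamma(B)\subseteq B$ and deduce injectivity on the $J$'s. The only difference is presentational: the paper encodes your factorisation of $\partial$ as the commutativity (by functoriality of the long exact $\Gamma$--$J$ sequence) of a two-row diagram comparing $W_\bullet\otimes H$ with $W_\bullet\otimes\Gamma(H)$, and concludes by the snake lemma rather than by computing $\ker\partial$ directly.
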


\begin{proof}
Let  $H$ denote $H^{2r-2}(X,\Q(r-1)) $. Setting $W_j = W_jH^1(C,\Q(1))$
 gives the following commutative diagram of mixed Hodge structures with exact rows:
$$
\xymatrix{
0 \ar[r]  & W_{-1}\otimes H \ar[r] &  W_0 \otimes H  \ar[r] & Gr_0^W\otimes H  \ar[r] & 0 \\
0 \ar[r]  & W_{-1}\otimes \Gamma(H) \ar[r] \ar[u] &  W_0 \otimes \Gamma(H)  \ar[r] \ar[u]& Gr_0^W\otimes \Gamma(H)  \ar[r] \ar[u]& 0 
}
$$
Since $W_{-1}\otimes H $ and $W_{-1}\otimes \Gamma(H)$ have negative weights, their
$ \Gamma $'s are trivial. It follows that we have a commutative diagram with exact rows:
$$
\xymatrix{
0 \ar[r]  &  \Gamma(W_0 \otimes H)  \ar[r] & \Gamma(Gr_0^W\otimes H)  \ar[r] & J(W_{-1}\otimes H)  \\
0 \ar[r]  &  \Gamma(W_0 \otimes \Gamma(H))  \ar[r] \ar[u]& \Gamma(Gr_0^W\otimes \Gamma(H))  \ar[r] \ar[u] & J(W_{-1}\otimes \Gamma(H))  \ar[u] 
}
$$
Since $Gr_0^W$ is pure Tate of weight~0, 
the middle vertical is an isomorphism.
We also have an injection $ H^1(\ol{C},\Q) \to H^1(C,\Q) $,
identifying $ W_{-1} $ with $ H^1(\ol{C}, \Q(1)) $.
Using semi-simplicity of polarized Hodge structures we see that
the natural injection
\[
 W_{-1} \otimes \Gamma\left( H \right) \to W_{-1} \otimes H
\]
is split, so that we obtain a split injection
\[
   J\left( W_{-1} \otimes \Gamma(H) \right)
\to
   J\left( W_{-1} \otimes H \right)
\,.
\]
Finally, noting that $\Gamma(W_0 \otimes \Gamma(H))  = \Gamma(W_0 )\otimes \Gamma(H) $ and using the snake lemma gives the desired result.
\end{proof}

\begin{proof}[Proof of Proposition~\ref{splitcase1}]
If $m > 1$, then by a weight argument
$$\Gamma(H^{1}(C, \mathbb{Q}) \otimes H^{2r-m-1}(X, \mathbb{Q})(r)) = 0$$
and the surjectivity is trivial. 
For $r$ arbitrary, and $m=1$, first note that
$ \CH^1(C,1)$  maps surjectively to $\Gamma(H^{1}(C,\Q(1)))$.
Under the assumption of the Hodge conjecture for $ X $ in codimension $r-1$, one also has that
$$ \CH^{r-1}(X,0;\Q) \to \Gamma\left(H^{2r-2}(X, \Q(r-1)) \right) $$ is surjective. Since the natural morphism
$$ \CH^1(C, 1) \otimes \CH^{r-1}(X, 0) \to \CH^{r}(C \times X, 1) $$
induced by pullback to $ C \times X  $ followed by the cup product is compatible with the tensor product in the K\"unneth decomposition, the
previous remarks and Lemma~\ref{SE} show that $\cl_{1,r}$ surjects onto $\Gamma( H^1(C,\Q) \otimes H^{2r-2}(X, \Q) (r-1)) $.
The last claim in the proposition follows from the first two by taking limits over open $U \subset C$.

\end{proof}

\begin{rmk}
Note that in the situation above, the surjectivity in Conjecture~\ref{MC} holds for every open $U \subset C$, and, in particular, one does not need to pass to the generic point. However, in general,
in the examples of~\cite[Section~5]{dJ-L} one has to.
\end{rmk}

\subsection{Base a product of two curves} \label{twocurve}

In this section, we prove the strong form (i.e., without passing to the generic point) of the Beilinson-Hodge conjecture  (Conjecture~\ref{MC}) for $r=2$ and $m=1$ in the special case of a split projection with base given by a product of two
curves, under a certain rigidity assumption (see below). More precisely, let $X$ be smooth projective, and $ \ol{C}_1$, $ \ol{C}_2$
smooth projective curves, with non-empty open $C_j \subsetneq \ol{C}_j$. Let
$\ol{S} = \ol{C}_1\times \ol{C}_2$, $S = C_1\times C_2$,
$\Sigma_j = \ol{C}_j\bs C_j$, and
$E = \ol{S}\bs S = \Sigma_1\times \ol{C}_2 \cup \ol{C}_1\times \Sigma_2$. Finally, let $\X = S \times X$ and
let $\pi: \X \rightarrow S$ denote the canonical projection map. 

\begin{thm}\label{MT0}
Let $X$ and $S$ be as above. If $H^2(\ol{S}, \Q) $ does not have
a non-zero $ \Q $ subHodge structure contained in $H^{2,0}(\ol{S}) \oplus H^{0,2}(\ol{S})$,
then
\[
\cl_{2,1}: \CH^2(S\times X, 1;\Q) \to \Gamma\big(H^3(S\times X,\Q(2))\big)
\]
is surjective.
\end{thm}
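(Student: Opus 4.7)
My plan is to apply the K\"unneth decomposition of $H^3(S\times X,\Q(2))$ and show that cycle classes surject onto the Hodge part of each piece. Since $S=C_1\times C_2$ with $C_j$ smooth affine, $H^0(S)=\Q$, $H^1(S)=H^1(C_1)\oplus H^1(C_2)$, $H^2(S)=H^1(C_1)\otimes H^1(C_2)$, and $H^3(S)=0$. The $i=0$ K\"unneth term $H^3(X,\Q(2))$ is pure of weight $-1$, so contributes nothing to $\Gamma$. For $i=1$, Lemma~\ref{SE} applied to each summand $H^1(C_j)\otimes H^2(X)(2)$ gives
\[
\Gamma\big(H^1(C_j)\otimes H^2(X)(2)\big) = \Gamma\big(H^1(C_j,\Q(1))\big)\otimes \Gamma\big(H^2(X,\Q(1))\big).
\]
The first factor is the regulator image of $\CH^1(C_j,1;\Q)$ (units on $C_j$); the second is the image of $\CH^1(X;\Q)$ by Lefschetz $(1,1)$. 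External products $\CH^1(C_j,1;\Q)\otimes\CH^1(X;\Q)\to\CH^2(S\times X,1;\Q)$ via pullback to $C_j\times X$ then surject onto the $i=1$ piece.

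The crucial case is $i=2$. Let $M = H^1(C_1)\otimes H^1(C_2)\otimes H^1(X)(2)$ and $V_j := \gr^W_2 H^1(C_j)\cong \Q(-1)^{n_j}$. The graded pieces of $M$ in the weight filtration are
\begin{align*}
\gr^W_{-1}M &= H^1(\ol{C}_1)\otimes H^1(\ol{C}_2)\otimes H^1(X)(2), \\
\gr^W_0 M &= \big(H^1(\ol{C}_1)\otimes V_2\ \oplus\ V_1\otimes H^1(\ol{C}_2)\big)\otimes H^1(X)(2), \\
\gr^W_1 M &= V_1\otimes V_2\otimes H^1(X)(2).
\end{align*}
Because $\Gamma$ vanishes on any pure Hodge structure of nonzero weight, the long exact sequence for $\Gamma$ gives an injection $\Gamma(M)\hookrightarrow \Gamma(\gr^W_0 M)$. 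Tate-ness of $V_j$ and Poincar\'e duality on the curves then identify
\[
\Gamma(\gr^W_0 M) \cong \hom_\MHS\big(H^1(\ol{C}_1),H^1(X)\big)^{\oplus n_2} \oplus \hom_\MHS\big(H^1(\ol{C}_2),H^1(X)\big)^{\oplus n_1}.
\]
Lefschetz $(1,1)$ on $\ol{C}_j\times X$ represents each $\phi\in \hom_\MHS(H^1(\ol{C}_j),H^1(X))$ by a cycle $Z\in\CH^1(\ol{C}_j\times X;\Q)$. For $f\in \CH^1(C_{j'},1;\Q)$ the external product $f\boxtimes Z|_{C_j\times X}\in \CH^2(S\times X,1;\Q)$ has image in $\Gamma(\gr^W_0 M)$ equal to $\Div_{\Sigma_{j'}}(f)\otimes\phi$, which lies in $\ker[V_{j'}\to J(\ol{C}_{j'})_\Q]$ tensored with $\phi$.

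It remains to identify $\Gamma(M)\subseteq \Gamma(\gr^W_0 M)$ with this cycle-representable subspace. The image of $\Gamma(M)$ equals the kernel of the connecting map $\partial\colon \Gamma(\gr^W_0 M)\to J(\gr^W_{-1}M)$, given by Yoneda product with the extension classes $\epsilon_j\in\Ext^1_\MHS(V_j,H^1(\ol{C}_j))$ of the puncture extensions $0\to H^1(\ol{C}_j)\to H^1(C_j)\to V_j\to 0$ (encoding the Abel--Jacobi images of $\Sigma_j$ in $J(\ol{C}_j)_\Q$). Cycle classes lie in $\ker\partial$ automatically, since $\Div_{\Sigma_{j'}}(f)\in \ker\epsilon_{j'}$. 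The rigidity hypothesis is what forces the reverse inclusion: the absence of a nonzero sub-Hodge structure of $H^2(\ol{S})$ contained in $H^{2,0}(\ol{S})\oplus H^{0,2}(\ol{S})$ ensures that the natural Yoneda map
\[
J(\ol{C}_{j'})_\Q \otimes \hom_\MHS\big(H^1(\ol{C}_j),H^1(X)\big) \longrightarrow J(\gr^W_{-1}M)
\]
is injective, so $\partial(v\otimes\phi)=0$ forces $v\in\ker\epsilon_{j'}$ and hence $v$ is a principal divisor. Establishing this injectivity, by identifying any nonzero kernel element with a sub-Hodge structure of $H^2(\ol{S})$ purely of type $(2,0)\oplus(0,2)$ and thereby contradicting rigidity, is the step I expect to require the most care.
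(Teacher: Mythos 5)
Your overall strategy coincides with the paper's: K\"unneth, dispose of the $i=0$ and $i=1$ pieces as in Proposition~\ref{splitcase1}, and for the $i=2$ piece identify $\Gamma(M)$ with the kernel of the connecting map $\partial\colon\Gamma(\gr^W_0M)\to J(\gr^W_{-1}M)$, which you then try to fill with products of units on $C_{j'}$ and divisor classes on $\ol{C}_j\times X$ supplied by Lefschetz $(1,1)$ and Abel's theorem. (The paper packages $\gr^W_0M$ as $\Gamma\big(H^3_E(\ol{S},\Q(2))^{\circ}\otimes H^1(X,\Q)\big)$ via a localization sequence and Mayer--Vietoris, but the content is identical.)

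There is, however, a misattribution in the last step that leaves the genuinely hard case open. The single-summand Yoneda map $J(\ol{C}_{j'})_\Q\otimes \hom_{\MHS}\big(H^1(\ol{C}_j),H^1(X)\big)\to J(\gr^W_{-1}M)$ is injective \emph{unconditionally}: by semisimplicity of polarizable pure Hodge structures, $\Gamma\big(H^1(\ol{C}_j,\Q(1))\otimes H^1(X,\Q)\big)\otimes\Q(0)$ splits off $H^1(\ol{C}_j,\Q(1))\otimes H^1(X,\Q)$, so $J$ of the corresponding sub-Hodge structure of $\gr^W_{-1}M$ injects (split) into $J(\gr^W_{-1}M)$. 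Hence your implication ``$\partial(v\otimes\phi)=0$ forces $v\in\ker\epsilon_{j'}$'' is true but requires no rigidity, and it only treats elements lying in a single summand of $\gr^W_0M$. The case that actually needs the hypothesis is an element $A+B\in\ker\partial$ with $A$ in the $V_1\otimes H^1(\ol{C}_2)$ summand, $B$ in the $H^1(\ol{C}_1)\otimes V_2$ summand, and $\partial A=-\partial B\neq 0$. To exclude this you must show that the two sub-Hodge structures $H^1(\ol{C}_1,\Q(1))\otimes\Gamma\big(H^1(\ol{C}_2,\Q(1))\otimes H^1(X,\Q)\big)$ and $H^1(\ol{C}_2,\Q(1))\otimes\Gamma\big(H^1(\ol{C}_1,\Q(1))\otimes H^1(X,\Q)\big)$ intersect trivially inside $H^1(\ol{C}_1,\Q(1))\otimes H^1(\ol{C}_2,\Q(1))\otimes H^1(X,\Q)$, so that $J$ of their direct sum still injects and $A$, $B$ are separately killed by $\partial$. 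A nonzero intersection is forced by type considerations to lie (after untwisting) in $H^{1,0}(\ol{C}_1)\otimes H^{1,0}(\ol{C}_2)\otimes H^{0,1}(X)\oplus H^{0,1}(\ol{C}_1)\otimes H^{0,1}(\ol{C}_2)\otimes H^{1,0}(X)$, and contracting against $H^{2d-1}(X,\Q(d))$ then produces a nonzero $\Q$-sub-Hodge structure of $H^2(\ol{S},\Q)$ contained in $H^{2,0}(\ol{S})\oplus H^{0,2}(\ol{S})$, contradicting the hypothesis. That reduction is the paper's key step and is the one missing from your write-up.
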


Note that if we were to replace $ S $ with $ C_1 \times \ol{C}_2 $ or $ S = C_1 \times \ol{C}_2 $
then the result is already part of Proposition~\ref{splitcase1}.
Again this holds even if the $ C_i $ are complete by reduction
to the case of one curve in the base.

We begin with some preliminary reductions. First observe that, as $\dim S = 2$,
by the weak Lefschetz theorem for affine varieties,
the K\"unneth decomposition of $ H^3(S\times X,\Q) $ is
\begin{equation*} 
H^0(S,\Q)\otimes H^3(X,\Q) \oplus H^1(S,\Q)\otimes H^2(X,\Q) \oplus 
H^2(S,\Q)\otimes H^1(X,\Q)
\,,
\end{equation*}
and we shall deal with the three summands separately
(after twisting with $ \Q(2) $).\\

For the first term, note that $ H^0(S,\Q)\otimes H^3(X,\Q)(2) $
is pure of weight $ -1 $ by the purity of $ H^3(X,\Q) $, so that
$ \Gamma(H^0(S,\Q)\otimes H^3(X,\Q)(2)) = 0 $.\\

\begin{lemma}
With notation as above, 
\begin{equation*}
\Gamma \big(H^1(S,\Q)\otimes H^{2}(X,\Q)(2)\big) =  \Gamma( H^1(S,\Q(1))) \otimes \Gamma(H^{2}(X,\Q(1))
\end{equation*}
and $ \cl_{1,2} $ is surjective.
\end{lemma}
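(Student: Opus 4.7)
The plan is to mimic the strategy of Lemma~\ref{SE}, reducing to the one-curve case by exploiting the Künneth decomposition of $H^1(S,\Q)$.

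First, I would decompose $H^1(S,\Q) = H^1(\ol{C}_1\times \ol{C}_2 \text{ restricted to } S,\Q)$. By the Künneth theorem,
\[
H^1(S,\Q) \simeq H^1(C_1,\Q)\otimes H^0(C_2,\Q)\ \oplus\ H^0(C_1,\Q)\otimes H^1(C_2,\Q) \simeq H^1(C_1,\Q)\oplus H^1(C_2,\Q),
\]
as a direct sum of MHS. Tensoring with $H^2(X,\Q(2))$ and taking $\Gamma$ therefore splits the statement into two pieces, one for each curve $C_j$. Each piece is exactly the identity
\[
\Gamma\bigl(H^1(C_j,\Q)\otimes H^2(X,\Q)(2)\bigr) = \Gamma(H^1(C_j,\Q(1)))\otimes \Gamma(H^2(X,\Q(1))),
\]
which is the conclusion of Lemma~\ref{SE} in the case $r=2$, $m=1$. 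Combining the two summands (and noting $\Gamma(H^1(S,\Q(1))) = \Gamma(H^1(C_1,\Q(1)))\oplus \Gamma(H^1(C_2,\Q(1)))$) yields the desired Künneth-type identification.

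For the surjectivity assertion, the strategy is to construct cycles in $\CH^2(S\times X,1;\Q)$ that realize the generators of the right-hand side through external products. For each $j=1,2$, consider the composition
\[
\CH^1(C_j,1)\otimes \CH^1(X,0) \longrightarrow \CH^2(S\times X,1),
\]
given by pullback along the projections $S\times X\to C_j\times X$ (and the other factor), followed by the product in higher Chow groups. The first factor $\CH^1(C_j,1)\to \Gamma(H^1(C_j,\Q(1)))$ is surjective (it is essentially $\CO^\times(C_j)\otimes\Q$ surjecting onto the Hodge-theoretic lattice of $H^1$), while the second $\CH^1(X,0)\to \Gamma(H^2(X,\Q(1)))$ is surjective by the Lefschetz $(1,1)$ theorem. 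By the compatibility of the higher Chow product with the cup product under $\cl$, the induced map realizes all classes in $\Gamma(H^1(C_j,\Q(1)))\otimes \Gamma(H^2(X,\Q(1)))$.

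The only subtle point to verify is that the K\"unneth identification of $\Gamma$ is genuinely respected by the cycle class map and by the external product, so that sums over $j=1,2$ in the Chow-theoretic picture correspond correctly to sums in the Hodge-theoretic picture. This is essentially formal, given the compatibility of Bloch's higher Chow groups with products and the behavior of $\cl_{r,m}$ under pullback. I expect no serious obstacle here, since this term of the Künneth decomposition avoids entirely the contribution of $H^2(\ol{S})$, where the rigidity hypothesis in Theorem~\ref{MT0} is actually needed; the present lemma therefore goes through unconditionally (for the Hodge conjecture in codimension one), paralleling the $m=1$ case of Proposition~\ref{splitcase1}.
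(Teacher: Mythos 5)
Your proposal is correct and matches what the paper intends: the paper's own proof simply says the equality is proved ``the same as Lemma~\ref{SE}'' and the surjectivity ``as in Proposition~\ref{splitcase1},'' leaving the details to the reader, and your argument supplies exactly those details (surjectivity of $\CH^1(C_j,1)\to\Gamma(H^1(C_j,\Q(1)))$, Lefschetz $(1,1)$ for $\Gamma(H^2(X,\Q(1)))$, and compatibility of the higher Chow product with cup product). The only cosmetic difference is that you reduce the equality to the one-curve case via the K\"unneth splitting $H^1(S,\Q)\simeq H^1(C_1,\Q)\oplus H^1(C_2,\Q)$ and invoke Lemma~\ref{SE} twice, whereas the paper intends rerunning the weight-filtration/semisimplicity argument verbatim with $H^1(S,\Q(1))$ in place of $H^1(C,\Q(1))$; both are valid and essentially equivalent.
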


\begin{proof}
The proof of the equality is the same as the proof of  Lemma~\ref{SE}.
Furthermore, the proof of surjectivity of $\cl_{1,2}$ is similar to the proof of Proposition~\ref{splitcase1}. We leave the details to the reader.
\end{proof}

\medskip

For the proof of Theorem~\ref{MT0} it remains to show that the
image of $\cl_{1,2}$ contains $ \Gamma(H^2(S,\Q)\otimes H^1(X,\Q)(2))$.
The following lemma gives a description of the latter in terms of the Abel-Jacobi map.
First  put 
\[
H^3_E(\ol{S},\Q)^{\circ} = \ker \big[H^3_E(\ol{S},\Q) \to H^3(\ol{S},\Q)\big].
\]

\begin{lemma}\label{E2lemma}
One has these identifications:

\medskip
\noindent
{\rm (i)} $ \Gamma(H^2(S,\Q)\otimes H^1(X,\Q)(2)\ \simeq$
\begin{equation*}
\ker \big[\Gamma\big(H^3_E(\ol{S},\Q(2))^{\circ}\otimes H^1(X,\Q)\big) \to J\big(
H^1(\ol{C}_1,\Q(1))\otimes H^1(\ol{C}_2,\Q(1))\otimes H^1(X,\Q)\big)\big].
\end{equation*}

\noindent
{\rm (ii)}  $\Gamma\big(H^3_E(\ol{S},\Q(2))^{\circ} \otimes H^{1}(X, \Q))\ \simeq$
\[
 \Gamma \biggl(\big[H^1(\ol{C}_1,\Q)\otimes
H^0_{\deg 0}(\Sigma_2,\Q) \bigoplus H^0_{\deg 0}(\Sigma_1,\Q)\otimes H^1(\ol{C}_2,\Q)\big]\otimes H^1(X,\Q(1))\biggr).
\]
\end{lemma}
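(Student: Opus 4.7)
The plan is to deduce both identifications from the localization (Gysin) long exact sequences for $S\subset \ol{S}$ and $C_i\subset \ol{C}_i$, combined with the K\"unneth decomposition on $\ol{S} = \ol{C}_1\times \ol{C}_2$ and $S = C_1\times C_2$, together with standard weight-counting for $\Gamma = \hom_{\MHS}(\Q(0),-)$.

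For (i), the localization sequence for $S\subset \ol{S}$ with support on $E$ yields the short exact sequence of MHS
\begin{equation*}
0 \to H^2(\ol{S},\Q(2))/\im \to H^2(S,\Q(2)) \to H^3_E(\ol{S},\Q(2))^{\circ} \to 0,
\end{equation*}
where $\im$ denotes the image of $H^2_E(\ol{S},\Q(2))\to H^2(\ol{S},\Q(2))$. The group $H^2_E(\ol{S},\Q(1))$ is spanned by the fundamental classes of the irreducible components $\{p\}\times \ol{C}_2$ and $\ol{C}_1\times \{q\}$ of $E$; via K\"unneth on $\ol{S}$, its image in $H^2(\ol{S},\Q(1))$ is exactly the Tate K\"unneth summand $H^2(\ol{C}_1,\Q(1))\otimes H^0(\ol{C}_2) \oplus H^0(\ol{C}_1)\otimes H^2(\ol{C}_2,\Q(1))$. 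Twisting once more, the quotient $H^2(\ol{S},\Q(2))/\im$ is identified with $H^1(\ol{C}_1,\Q(1))\otimes H^1(\ol{C}_2,\Q(1))$, which is pure of weight $-2$. Tensoring the displayed short exact sequence with the pure weight-$1$ MHS $H^1(X,\Q)$ makes this kernel pure of weight $-1$, so its $\Gamma$ vanishes, and the six-term exact sequence for $(\Gamma,J)$ then identifies $\Gamma(H^2(S,\Q(2))\otimes H^1(X,\Q))$ with the kernel of the connecting map into $J(H^1(\ol{C}_1,\Q(1))\otimes H^1(\ol{C}_2,\Q(1))\otimes H^1(X,\Q))$, giving (i).

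For (ii), I unfold $H^3_E(\ol{S},\Q(2))^{\circ}$ via the localization sequence for each open curve,
\begin{equation*}
0 \to H^1(\ol{C}_i,\Q) \to H^1(C_i,\Q) \to H^0_{\deg 0}(\Sigma_i,\Q)(-1) \to 0.
\end{equation*}
Set $V_i := H^1(\ol{C}_i,\Q)$ (pure of weight $1$) and $U_i := H^0_{\deg 0}(\Sigma_i,\Q)(-1)$ (pure of weight $2$). By K\"unneth and the vanishing $H^2(C_i,\Q) = 0$, one has $H^2(S,\Q) = H^1(C_1,\Q)\otimes H^1(C_2,\Q)$, while the image of $H^2(\ol{S},\Q)$ in $H^2(S,\Q)$ is $V_1\otimes V_2$ (the $H^2(\ol{C}_i)$-summands die on restriction to $C_i$). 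Therefore
\begin{equation*}
H^3_E(\ol{S},\Q(2))^{\circ} \simeq \bigl(H^1(C_1,\Q)\otimes H^1(C_2,\Q)/V_1\otimes V_2\bigr)(2),
\end{equation*}
whose weight filtration has graded quotients $(V_1\otimes U_2 \oplus U_1\otimes V_2)(2)$ (pure of weight $-1$) and $(U_1\otimes U_2)(2)$ (pure of weight $0$). After tensoring with $H^1(X,\Q)$ the top graded quotient becomes pure of weight $1$, so has trivial $\Gamma$, and $\Gamma$ of the whole object equals $\Gamma$ of the weight $-1$ part. A routine reshuffling of Tate twists,
\begin{equation*}
V_i(1)\otimes U_j(1)\otimes H^1(X,\Q) \simeq H^1(\ol{C}_i,\Q)\otimes H^0_{\deg 0}(\Sigma_j,\Q)\otimes H^1(X,\Q(1)),
\end{equation*}
then recovers the right hand side of (ii).

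The main obstacle is the careful bookkeeping of weights and Tate twists; the one non-formal input is the identification of the image of $H^2_E(\ol{S},\Q(1))\to H^2(\ol{S},\Q(1))$ with the Tate K\"unneth summand of $H^2(\ol{S},\Q(1))$. Once this is in place, both parts of the lemma follow formally from the localization sequences and the vanishing of $\Gamma$ on pure MHS of nonzero weight.
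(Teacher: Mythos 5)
Your proof is correct. Part (i) is essentially identical to the paper's argument: the same localization sequence $0\to H^2(\ol{S},\Q)/H^2_E(\ol{S},\Q)\to H^2(S,\Q)\to H^3_E(\ol{S},\Q)^{\circ}\to 0$, the same K\"unneth identification of the first term with $H^1(\ol{C}_1,\Q)\otimes H^1(\ol{C}_2,\Q)$ (using that $H^2_E(\ol{S},\Q(1))$ hits exactly the Tate summands), and the same $\Gamma$--$J$ six-term sequence combined with purity of the subobject. For part (ii) you take a genuinely different route: the paper dualizes, using Poincar\'e duality $H^3_E(\ol{S},\Q(2))\simeq H_1(E,\Q)$ and then a Mayer--Vietoris sequence for $E=\Sigma_1\times\ol{C}_2\cup\ol{C}_1\times\Sigma_2$ together with a comparison diagram into $H_1(\ol{S},\Q)$, whereas you stay in cohomology, writing $H^3_E(\ol{S},\Q(2))^{\circ}$ as $\big(H^1(C_1,\Q)\otimes H^1(C_2,\Q)\,/\,H^1(\ol{C}_1,\Q)\otimes H^1(\ol{C}_2,\Q)\big)(2)$ and reading off its weight filtration from the curve localization sequences $0\to H^1(\ol{C}_i,\Q)\to H^1(C_i,\Q)\to H^0_{\deg 0}(\Sigma_i,\Q)(-1)\to 0$. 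Your version reuses the product structure already exploited in (i), makes the weight bookkeeping explicit, and correctly identifies the lowest-weight subobject as the direct sum on the right-hand side of (ii) (the two tensor factors intersect precisely in $H^1(\ol{C}_1,\Q)\otimes H^1(\ol{C}_2,\Q)$, so the sum is indeed direct after passing to the quotient); the paper's dual/Mayer--Vietoris version is the one that would generalize to a boundary $E$ that is not of product type. Both arguments land on the same answer after unwinding the Tate twists.
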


\begin{proof}
Part (i): Observe that
\[
\frac{H^2(\ol{S},\Q)}{H^2_E(\ol{S},\Q)} = H^1(\ol{C}_1,\Q)\otimes H^1(\ol{C}_2,\Q),\quad
H^2(S,\Q) = H^1(C_1,\Q)\otimes H^1(C_2,\Q),
\]
as $H^2(C_j) = 0$. There is a short exact sequence:
\[
0\to \frac{H^2(\ol{S},\Q)}{H^2_E(\ol{S},\Q)} \to H^2(S,\Q) \to H^3_E(\ol{S},\Q)^{\circ} \to 0.
\]
This in turn gives rise to a short exact sequence:
\[
0\to H^1(\ol{C}_1,\Q(1))\otimes H^1(\ol{C}_2,\Q(1))\otimes H^1(X,\Q) \to
\]
\[
H^1({C}_1,\Q(1))\otimes H^1({C}_2,\Q(1))\otimes H^1(X,\Q) \to H^3_E(\ol{S},\Q(2))^{\circ}\otimes H^1(X,\Q) \to 0.
\]
So, using purity,
\begin{equation*}
\Gamma\big(H^1({C}_1,\Q(1))\otimes H^1({C}_2,\Q(1))\otimes H^1(X,\Q)\big) 
\end{equation*}
can be identified with 
\begin{equation*}
\ker \big[\Gamma\big(H^3_E(\ol{S},\Q(2))^{\circ}\otimes H^1(X,\Q)\big) \to J\big(
H^1(\ol{C}_1,\Q(1))\otimes H^1(\ol{C}_2,\Q(1))\otimes H^1(X,\Q)\big)\big].
\end{equation*}

Part (ii):  Poincar\'e duality gives an isomorphism of MHS
\begin{equation} \label{E3}
H^3_E(\ol{S},\Q(2)) \simeq H_1(E,\Q), \ {\rm hence}\ 
H^3_E(\ol{S},\Q)^{\circ} \simeq \ker \big(H_1(E,\Q) \to H_1(\ol{S},\Q)\big)
\,.
\end{equation}
Moreover the Mayer-Vietoris sequence gives  us the exact sequence
\[
0\to  \big[H_1(\ol{C}_1\times  \Sigma_2,\Q) \oplus H_1(\Sigma_1\times \ol{C}_2,\Q)\big]\otimes H^1(X,\Q) 
\]
\[
 \to H_1(E,\Q)\otimes H^1(X,\Q) \to H_0(\Sigma_1\times\Sigma_2,\Q)\otimes H^1(X,\Q)
\,.
\]
But $\Gamma \big(H_0(\Sigma_1\times\Sigma_2,\Q)\otimes H^1(X,\Q)\big) = 0$; moreover
one has a commutative diagram
\[
\begin{matrix}
H_1(\ol{C}_1\times \Sigma_2,\Q)\oplus H_1(\Sigma_1\times \ol{C}_2,\Q)&\hookrightarrow&H_1(E,\Q)\\
||&&\big\vert\\
H_1(\ol{C}_1,\Q)\otimes H_0(\Sigma_2,\Q)\oplus H_0(\Sigma_1,\Q)\otimes H_1(\ol{C}_2,\Q)&&\big\vert\\
\biggl\downarrow&&\biggr\downarrow\\
H_1(\ol{C}_1,\Q)\otimes H_0(\ol{C}_2,\Q) \oplus H_0(\ol{C}_1,\Q)\otimes H_1(\ol{C}_2,\Q)&\simeq&
H_1(\ol{S},\Q).
\end{matrix}
\]
Hence from this and~\eqref{E3},
$\Gamma \big(H^3_E(\ol{S},\Q(2))^{\circ}\otimes H^1(X,\Q)\big)$ can be identified
with
\[
 \Gamma \biggl(\big[H^1(\ol{C}_1,\Q)\otimes
H^0_{\deg 0}(\Sigma_2,\Q) \bigoplus H^0_{\deg 0}(\Sigma_1,\Q)\otimes H^1(\ol{C}_2,\Q)\big]\otimes H^1(X,\Q(1))\biggr).
\]
\end{proof}

\begin{proof}[Proof of Theorem~\ref{MT0}]
Note that by the Lefschetz (1,1) theorem, $\Gamma\big(H^1(\ol{C}_j,\Q)\otimes H^1(X,\Q)(1)\big)$ is
algebraic.  Let us assume for the moment that there exists
$B$ in $ \Gamma\big( H^0_{\deg 0}(\Sigma_1,\Q)\otimes H^1(\ol{C}_2,\Q)\otimes H^1(X,\Q(1))\big)$ of the
form $B = \xi\times D$, where $D \subset \ol{C}_2\times X$ is an irreducible curve, $\xi \in
H^0_{\deg 0}(\Sigma_1,\Q)$, and that $B$ is in the kernel of the Abel-Jacobi map
in Lemma~\ref{E2lemma}(i). Notice that the inclusion $H^1(\ol{C}_1,\Q(1))\otimes \Q(1)\cdot [D] \hookrightarrow
H^1(\ol{C}_1,\Q(1))\otimes H^1(\ol{C}_2,\Q(1))\otimes H^1(X,\Q)$ defines a splitting, and hence
an inclusion 
\[
J\big(H^1(\ol{C}_1,\Q(1))\big) \hookrightarrow J\big(
H^1(\ol{C}_1,\Q(1))\otimes H^1(\ol{C}_2,\Q(1))\otimes H^1(X,\Q)\big).
\]
By applying Abel's theorem to $\ol{C}_1$, it follows that there exists
$f\in\C( \ol{C}_1\times D)^{\times}$ for which $(f) = \xi\times D = B$, thus
supplying the necessary element in $\CH^2(S \times X, 1 ; \Q)$. The same story holds if we replace
$D$ by any divisor with non-trivial image in the Neron-Severi
group. Using a basis for the Neron-Severi group of $\ol{C}_2\times X$,
one sees that the kernel of the Abel-Jacobi map restricted to
$ \Gamma\big( H^0_{\deg 0}(\Sigma_1,\Q)\otimes H^1(\ol{C}_2,\Q)\otimes H^1(X,\Q(1))\big)$ 
is in the image of $ \cl_{1,2} $.
A similar story holds separately for
$A$ in $\Gamma\big( H^1(\ol{C}_1,\Q)\otimes H^0_{\deg 0}(\Sigma_2,\Q)\otimes H^1(X,\Q(1))\big)$
in the kernel of the Abel-Jacobi map.

The more complicated issue is the case where $ A+B $ in
\[
\Gamma \biggl(\big[H^1(\ol{C}_1,\Q)\otimes
H^0_{\deg 0}(\Sigma_2,\Q) \bigoplus H^0_{\deg 0}(\Sigma_1,\Q)\otimes H^1(\ol{C}_2,\Q)\big]\otimes H^1(X,\Q(1))\biggr)
\]
is in the kernel of the Abel-Jacobi map.  The problem boils down to the
following.  There are two subHodge structures
$V_1,\ V_2$ of $H^1(\ol{C}_1,\Q(1))\otimes H^1(\ol{C}_2,\Q(1))\otimes H^1(X,\Q)$, where
$V_1 = H^1(\ol{C}_1,\Q(1))\otimes \Gamma\big(H^1(\ol{C}_2,\Q(1))\otimes H^1(X,\Q)\big)$,
and $V_2 \simeq H^1(\ol{C}_2,\Q(1))\otimes \Gamma\big(H^1(\ol{C}_1,\Q(1))\otimes H^1(X,\Q)\big)$
is defined similarly. If their intersection $ V $ is trivial,
then
\[
J(V_1) \oplus J(V_2)\hookrightarrow J\big(H^1(\ol{C}_1,\Q(1))\otimes H^1(\ol{C}_2,\Q(1))\otimes H^1(X,\Q)\big),
\]
so $ A+B$ in the kernel of the Abel-Jacobi map implies that
$A,\ B$ are in the kernel, and from our earlier discussion it
follows that then
\[
\cl_{2,1} :\CH^2(S\times X,1;\Q) \to \Gamma H^3(S\times X,\Q(2))
\]
is surjective. 
If $ V $ is non-trivial, then from types we see that $ V(-2) $ is contained in
\[
 \big\{H^{1,0}(\ol{C}_1)\otimes H^{1,0}(\ol{C}_2) \otimes 
H^{0,1}(X)\big\} \bigoplus \big\{H^{0,1}(\ol{C}_1)\otimes H^{0,1}(\ol{C}_2) \otimes 
H^{1,0}(X)\big\}
\]
inside $ H^2(\ol{S}, \Q) \otimes H^1(X, \Q) $.
Tensoring this with $ H^{2d -1}(X, \Q(d))$, where $d = \dim X$,
and applying the cup product, 
\[
 H^1(X, \Q) \times H^{2d -1}(X, \Q(d)) \xrightarrow{\cup}
H^{2d}(X,\Q(d)),
\] 
followed by the identification
$H^{2d}(X,\Q(d)) \simeq \Q $, we find that $ V(-2) $ results in
a non-trivial $\Q$-subHodge structure 
of $ H^2(\ol{S} , \Q) $ contained in
$ H^{2,0}(\ol{S}) \bigoplus H^{0,2}(\ol{S}) $.
\end{proof}

We conclude this section with some discussion of the rigidity condition appearing in Theorem~\ref{MT0}.

\begin{ex} \label{CMex}
If the $\ol{C}_j$ are elliptic curves, then the existence of
a non-trivial (hence rank $2$) $\Q$-subHodge structure
implies that $\ol{C}_j = \C/\{\Z\oplus \Z\tau_j\}$, where $\Q(\tau_j)/\Q$ is a quadratic extension.
To see this, observe that this subHodge structure by Poincar\'e
duality corresponds to two independent classes in $ H_2(\ol{C}_1 \times \ol{C}_2, \Q) $,
\[
\xi_j = k_1^{(j)}\alpha_1\otimes \alpha_2 + k_2^{(j)}\alpha_1\otimes \beta_2 + 
k_3^{(j)}\beta_1\otimes \alpha_2 + k_4^{(j)}\beta_1\otimes \beta_2 \qquad (j=1,2),
\]
where $\alpha_j = \ol{[0,1]} $, $\beta_j = \ol{[0,\tau_j]}$,
and all $ k_i^{(j)} $ are in $ \Q $. Thus because of types we
have
\[
0 = \int_{\xi_j}dz_1\wedge d\ol{z}_2 = k^{(j)}_1 + k^{(j)}_2\ol{\tau}_2 + k^{(j)}_3\tau_1 + 
k^{(j)}_4\tau_1\ol{\tau}_2
\qquad (j=1,2)
,
\]
so that
\begin{equation*}
 \tau_1 = - \frac{k^{(j)}_2\ol{\tau}_2 + k^{(j)}_1}{k^{(j)}_4\ol{\tau}_2+k^{(j)}_3}
\qquad (j=1,2)
.
\end{equation*}
Using $j = 1,2$, we can solve for $\tau_2$, viz.,
\[
[k^{(1)}_2k^{(2)}_4 - k^{(2)}_2k^{(1)}_4]\tau_2^2 + \cdots = 0.
\]
But $\xi_1,\ \xi_2$ independent and Im$(\tau_j)\ne 0$ implies that
$k^{(1)}_2k^{(2)}_4 - k^{(2)}_2k^{(1)}_4 \ne 0$.
Then $ \Q(\tau_1) = \Q(\ol{\tau}_2) = \Q(\tau_2) $,
so that the $ \ol{C_j} $ are isogenous and have complex multiplication.
Therefore the Neron-Severi group of $ \ol{C}_1 \times \ol{C}_2  $
has rank 4, and gives rise to a subHodge structure
$ W \subseteq H^2(\ol{C}_1 \times \ol{C}_2, \Q(1)) $ of dimension~4,
with $ W_\C = H^{1,1}(\ol{C}_1 \times \ol{C}_2) $.
Its orthogonal $ V $ under the cup product 
$ H^2(\ol{C}_1 \times \ol{C}_2, \Q(1))  \otimes H^2(\ol{C}_1 \times \ol{C}_2, \Q(1)) \to
H^4(\ol{C}_1 \times \ol{C}_2, \Q(2)) $
is now a subHodge structure of $ H^2(\ol{C}_1 \times \ol{C}_2, \Q(1)) $
with
$ V_\C = H^{2,0}(\ol{C}_1 \times \ol{C}_2) \oplus H^{0,2}(\ol{C}_1 \times \ol{C}_2) $.
\end{ex}

Note that a simple $\ol{\Q}$-spread argument
implies that the  $\ol{C}_j$ in Example~\ref{CMex}
are defined over $\ol{\Q}$ (and
more precise statements are known classically). This leads to

\begin{q} Let $W$ be a smooth projective surface.
If $H^{2,0}(W)\oplus H^{0,2}(W)$ contains a non-trivial $\Q$-subHodge Structure of $H^2(W,\Q)$,
can $W$ be obtained by base extension from a surface defined over $\ol{\Q}$?
\end{q}

\begin{prop}
Suppose $X$ is a K3 surface or an abelian surface. Then the answer to the previous 
question is positive.
\end{prop}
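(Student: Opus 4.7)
The plan is to show that the Hodge-theoretic hypothesis forces $W$ to have maximal Picard number ($20$ for a K3 surface, $4$ for an abelian surface), and then to invoke the classical fact that such surfaces have complex multiplication and consequently admit models over $\ol{\Q}$.

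Let $V \subset H^2(W,\Q)$ be a non-zero $\Q$-subHodge structure with $V_{\C} \subset H^{2,0}(W) \oplus H^{0,2}(W)$. In both cases under consideration $h^{2,0}(W) = 1$. A short argument using that complex conjugation preserves $V_{\R}$ shows that $V_{\C}$ either contains $H^{2,0}(W)$ entirely, or meets neither $H^{2,0}(W)$ nor $H^{0,2}(W)$ and therefore lies in $H^{1,1}$; the latter is incompatible with $V_{\C} \subset H^{2,0}\oplus H^{0,2}$ and $V\ne 0$, and the former, combined with $h^{2,0}=1$, gives $V_{\C} = H^{2,0}\oplus H^{0,2}$, so $\dim_{\Q}V = 2$.

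Next I would identify $V$ with the transcendental part of $H^2(W,\Q)$. Since $V$ is of pure type $(2,0)+(0,2)$, it is orthogonal under the intersection pairing to $H^{1,1}$, hence to $\mathrm{NS}(W)_{\Q}$, so $V$ is contained in the transcendental lattice $T(W)_{\Q} := \mathrm{NS}(W)_{\Q}^{\perp}$. The intersection pairing restricted to $V_{\C}$ is the non-degenerate pairing $H^{2,0}\otimes H^{0,2}\to\C$ (using $\int \omega\wedge\overline{\omega}>0$), so $T(W)_{\Q} = V \oplus V^{\perp}$ with $V^{\perp}_{\C} \subset H^{1,1}$. The Lefschetz $(1,1)$ theorem then gives $V^{\perp} \subset \mathrm{NS}(W)_{\Q}$, which combined with $V^{\perp} \subset \mathrm{NS}(W)_{\Q}^{\perp}$ and the non-degeneracy of the pairing on $\mathrm{NS}(W)_{\Q}$ (Hodge index) forces $V^{\perp} = 0$. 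Hence $T(W)_{\Q} = V$ has $\Q$-dimension $2$, i.e., $W$ has maximal Picard number.

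Finally I would split into the two cases. If $W$ is a K3 surface, then $\mathrm{rank}\,\mathrm{NS}(W) = 22 - 2 = 20$, so $W$ is a singular K3 surface; by the theorem of Shioda--Inose every such $W$ is Kummer-related to a product $E_1\times E_2$ of isogenous CM elliptic curves, and admits a model over $\ol{\Q}$. If $W$ is an abelian surface, then the Picard number is $h^{1,1}-0 = 4$, which is the maximum, and classical CM theory shows that $W$ is either isogenous to a product of isogenous CM elliptic curves or is simple with endomorphism algebra a CM field of degree $4$; in either case $W$ has complex multiplication and hence descends to $\ol{\Q}$ by Shimura--Taniyama. The main obstacle, and really the substance of the argument, is the appeal to these two classical descent results; the Hodge-theoretic reduction to maximal Picard number that precedes them is completely elementary.
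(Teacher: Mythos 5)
Your proof is correct, and it diverges from the paper's in an interesting way on the abelian surface case. Both arguments begin identically: since $h^{2,0}(W)=1$, any non-zero $\Q$-subHodge structure $V$ with $V_\C\subseteq H^{2,0}\oplus H^{0,2}$ must satisfy $V_\C=H^{2,0}(W)\oplus H^{0,2}(W)$ and $\dim_\Q V=2$. From there the paper observes that $V$ is a Hodge structure of type $(1,0,1)$, hence has abelian Mumford--Tate group; for an abelian surface this forces the Mumford--Tate group of the abelian variety itself to be abelian, so $W$ has complex multiplication and descends to $\ol{\Q}$, while for a K3 it simply asserts maximal Picard rank and rigidity. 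You instead run a uniform Néron--Severi argument: the non-degeneracy of the cup product on $V$ and on $\mathrm{NS}(W)_\Q$ (Hodge index), together with Lefschetz $(1,1)$, identifies $V$ with the full transcendental part and yields maximal Picard number in both cases; you then invoke Shioda--Inose for singular K3 surfaces and the classification of abelian surfaces of Picard number $4$. Your route has the merit of making explicit the "maximal Picard rank" step that the paper leaves unjustified in the K3 case, and of avoiding Mumford--Tate groups altogether; the paper's route is shorter for the abelian surface because it never needs the Néron--Severi computation. One small correction: a \emph{simple} abelian surface with CM by a quartic CM field has Picard number $2$ (the Rosati-fixed part of a quartic CM field is its real quadratic subfield), not $4$, so that case cannot occur in your dichotomy; the only abelian surfaces of Picard number $4$ are those isogenous to a product of two mutually isogenous CM elliptic curves. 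Since that (spurious) case would in any event also be CM and defined over $\ol{\Q}$, the slip does not affect the validity of your argument.
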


\begin{proof}
In both cases, $H^{0,2}$ is one dimensional, so the assumption
implies that $H^{2,0}(W)\oplus H^{0,2}(W)$
arises from a $\Q$-subHodge structure of $H^2(W,\Q)$. Since the former is a Hodge structure of type
$(1,0,1)$, it follows that its Mumford-Tate group is abelian. In the case of an abelian surface, this implies that
the Mumford-Tate group of the abelian variety is abelian, and therefore the abelian surface has complex multiplication.
On the other hand, every CM abelian variety over $\C$ is defined over number field. If $W$ is a $K3$ surface,
then it has maximal Picard rank, hence is well-known to be rigid, {\em a fortiori,} defined over $\ol{\Q}$.
\end{proof}

\begin{rmk}In  spite of the mild ``rigidity'' assumption in Theorem~\ref{MT0},
any attempt to extend the theorem to the generic point of $S$, without
introducing some conjectural assumptions, seems incredibly
difficult.
\end{rmk}

\section{Generalities} \label{generalsection}

Before proceeding further, and to be able to move further ahead,
we explain some necessary assumptions.  
{\em For the remainder of this paper (with the occasional reminder), we assume the following:}

\begin{ass}\label{ASS}  (i) The Hodge conjecture.

\medskip
(ii) The Bloch-Beilinson conjecture on the injectivity of the Abel-Jacobi map for Chow groups 
of smooth projective varieties defined over $\ol{\Q}$ (see~\cite[Conj 3.1 and \S4]{Lew2}).
\end{ass}

To spare the reader of time consuming search of  multiple  sources by many others, we  refer mostly
to~\cite{Lew2}, for all the necessary statements and details.
Let $Z/\C$ be any smooth projective variety of dimension $d_0$, $r\geq 0$, and 
put 
\[
\CH_{AJ}^r(Z;\Q)  = \ker \big(AJ : \CH^r_{\hom}(Z;\Q) \to J\big(H^{2r-1}(Z,\Q(r))\big).
\]
As mentioned in \S\ref{S1}, we  recall the notion of a descending Bloch-Beilinson (BB) filtration
$\{F^{\nu}\CH^r(Z;\Q)\}_{\nu\geq0}$, with $F^{0}\CH^r(Z;\Q) = \CH^r(Z;\Q)$, $F^{1}\CH^r(Z;\Q) = \CH^r_{\hom}(Z;\Q)$,
$F^{r+1}\CH^r(Z;\Q) = 0$, and satisfying a number of properties codified
 for example in ~\cite[\S11]{Ja2}, \cite[\S4]{Lew2}. There is also  the explicit construction of
a candidate BB filtration by Murre, based on a conjectured Chow-K\"unneth decomposition, 
and subsequent conjectures in~\cite{M}, 
which is equivalent to the existence
of a BB filtration  as formulated in~\cite[\S11]{Ja2}).  Further, Jannsen also proved that the BB filtration is unique if
it exists.  The construction of the filtration in~\cite{Lew1} (and used in~\cite{Lew2})
relies on Assumptions~\ref{ASS}, which if
true, provides the existence of a BB filtration, and hence is
the same filtration as Murre's, by the aforementioned uniqueness.

All candidate filtrations seem to show that $F^2\CH^r(Z;\Q)\subseteq \CH_{AJ}^r(Z;\Q)$. The
following is considered highly non-trivial:

\begin{conj} \label{C1}
{\rm $\CH_{AJ}^r(Z;\Q) = F^2\CH^r(Z;\Q)$.}
\end{conj}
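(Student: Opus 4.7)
The inclusion $F^2\CH^r(Z;\Q)\subseteq\CH_{AJ}^r(Z;\Q)$ is essentially built into every candidate BB filtration discussed in the paper, so the plan is to establish the reverse inclusion. Given $\xi\in\CH_{AJ}^r(Z;\Q)$, I would spread $Z/\C$ to a smooth projective morphism $\rho:\X\to\Ss$ defined over $\ol{\Q}$ whose generic geometric fiber recovers $Z$, and lift $\xi$ to $\widetilde{\xi}\in\CH^r(\X;\Q)$ after possibly shrinking $\Ss$. Under the spread-based construction of $F^\bullet$ in \cite{Lew1} recalled in \cite{Lew2} --- which is the construction available under Assumptions~\ref{ASS} and which, by Jannsen's uniqueness, agrees with Murre's --- membership of $\xi$ in $F^2\CH^r(Z;\Q)$ translates into the class $[\widetilde{\xi}]\in H^{2r}(\X,\Q(r))$ belonging to the second step $L^2$ of the Leray filtration for $\rho$.

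The zeroth graded Leray piece of $[\widetilde{\xi}]$ is the fiber cohomology class on $Z$, which vanishes because $\xi\in\CH_{\hom}^r(Z;\Q)$. The first graded piece lives in $H^1(\Ss,R^{2r-1}\rho_\ast\Q(r))$ and, by the standard Leray-theoretic interpretation (see e.g.\ \cite{K-L}), is the extension class of the admissible normal function $s\mapsto AJ(\widetilde{\xi}\cdot\X_s)$. The hypothesis $AJ(\xi)=0$ says precisely that this normal function vanishes at the fiber $Z$; rigidity of sections of an admissible variation of MHS then forces it to vanish identically on $\Ss$, so its extension class is trivial and $[\widetilde{\xi}]$ indeed lies in $L^2$.

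The main obstacle is promoting this cohomological vanishing to the cycle-theoretic statement $\xi\in F^2\CH^r(Z;\Q)$, rather than merely $[\widetilde{\xi}]\in L^2$. Here both parts of Assumptions~\ref{ASS} do their work: the Hodge conjecture produces algebraic cycles realizing the Hodge classes appearing in the relevant graded Leray pieces, so one can subtract algebraic correction terms supported on pullbacks of codimension-$\geq 1$ subvarieties of $\Ss$ and reduce to a cycle supported over the generic point; injectivity of Abel-Jacobi on smooth projective varieties over $\ol{\Q}$ then forces the residual, $\ol{\Q}$-defined piece to vanish rationally in the Chow group. The delicate and, I expect, hardest step is the iterative book-keeping --- one may have to spread $\X$ itself further in order to control the filtration on $\CH^r(\X)$ --- together with a careful matching of the Leray-based filtration above with the axiomatic $F^\bullet$ on $\CH^r(Z;\Q)$, an identification whose very existence already relies on Assumptions~\ref{ASS}.
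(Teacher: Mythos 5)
This statement is a \emph{conjecture} in the paper, not a theorem: the authors explicitly call it ``highly non-trivial,'' assume it as a hypothesis in Theorems~\ref{MT1} and~\ref{MT2}, and record only that, under Assumptions~\ref{ASS}, it is \emph{equivalent} (by~\cite[Thm~1.1]{Lew2}) to the surjectivity of $\cl_{r,1} : \CH^r(\Spec(\C(Z)),1;\Q) \to \Gamma\big(H^{2r-1}(\C(Z),\Q(r))\big)$, i.e.\ to statement (S3) of~\cite{dJ-L} --- which is precisely the open problem the whole paper is circling. So a correct proof along your lines would settle the Beilinson--Hodge conjecture for function fields; you should therefore expect a gap, and there is one.

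The gap is the sentence ``rigidity of sections of an admissible variation of MHS then forces it to vanish identically on $\Ss$.'' Rigidity (the theorem of the fixed part) applies to the \emph{constant} piece of the normal function, i.e.\ to the term $J\big(H^0(\eta_{\Ss},R^{2r-1}\rho_\ast\Q(r))\big)$ in diagram~\eqref{GM}; there, vanishing at one point does indeed give identical vanishing, because the fixed part injects into $H^{2r-1}(Z,\Q(r))$ as a direct summand. But the obstruction to $\xi\in F^2$ is the \emph{topological invariant} of the normal function in $\Gamma\big(H^1(\eta_{\Ss},R^{2r-1}\rho_\ast\Q(r))\big)$, and a normal function with non-zero topological invariant is not a flat section of anything: it is a holomorphic section of a family of tori, and it can perfectly well vanish at a prescribed point while being globally non-trivial --- its zero locus is merely some (by Brosnan--Pearlstein) algebraic subset of $\Ss$. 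To upgrade ``vanishes at the single very general point $s_0$ corresponding to $\ol{\Q}(\eta)\hookrightarrow\C$'' to ``vanishes identically'' you would need that zero locus to be defined over $\ol{\Q}$, which is not known and is essentially a reformulation of the injectivity of the evaluation map $\{\text{normal functions}\}/J(\text{fixed part})\to J\big(H^{2r-1}(Z,\Q(r))\big)$ --- that is, of Conjecture~\ref{C1} itself. At that step your argument is circular; everything downstream (the Leray bookkeeping, the use of the Hodge conjecture to decycle the graded pieces, and Assumption~\ref{ASS}(ii) to kill the residual $\ol{\Q}$-cycle) is the standard machinery of~\cite{Lew1} and~\cite{Lew2} and is fine, but it only converts the conjecture into this one irreducible assertion rather than proving it.
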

In light  of  Assumptions~\ref{ASS}, this is equivalent to the
surjectvity of
\[
\cl_{1,r} : \CH^r(\Spec(\C(Z)) , 1;\Q) \twoheadrightarrow \Gamma\big(H^{2r-1}(\C(Z),\Q(r))\big)
\]
as in~\cite[(S3)]{dJ-L},  provided both statements apply to all smooth projective $Z/\C$.
 For a proof, see~\cite[Thm~1.1]{Lew2}.
 
 \bigskip

 One of the key properties of the BB filtration is the factorization of graded pieces of that
 filtration through the Grothendieck motive. Let $\Delta_Z$ in $ \CH^{d_0}(Z\times Z)$ be the diagonal class,
 with cohomology class $[\Delta_Z]$ in $ H^{2d_0}(Z\times Z,\Z(d_0))$.
Write 
$ \Delta_Z = \sum_{p+q=2d_0} \Delta_Z(p,q) $ in $ \CH^{d_0}(Z \times Z; \Q)$
such that
\[
 \bigoplus_{p+q=2d_0}[\Delta_Z(p,q)]\in \bigoplus_{p+q=2d_0}H^p(Z,\Q)\otimes H^q(Z,\Q)(d_0) = H^{2d_0}(Z\times Z,\Q(d_0)).
 \]
is the K\"unneth decomposition of $ [\Delta_Z] $.
 Then
 \[
 \Delta_Z(p,q)_*\big|_{Gr_F^{\nu}\CH^r(Z;\Q)}
 \]
is independent of the choice of $\Delta_Z(p,q)$.
(This is essentially due to the fact that $F^1\CH^r(Z;\Q) = \CH^r_{\hom}(Z;\Q)$, and functoriality properties of the
 BB filtration.) Furthermore,
 \begin{equation} \label{Delta1}
   \Delta_Z(2d_0-2r+\ell,2r-\ell)_*\big|_{Gr_F^{\nu}\CH^r(Z;\Q)}
=
  \delta_{\ell,\nu} \textup{id}_{Gr_F^{\nu}\CH^r(Z;\Q)}
 \end{equation}
with $ \delta_{i,j} $ the Kronecker delta.
  Consequently,
\begin{equation} \label{Delta2}
  \Delta_Z(2d_0-2r+\nu,2r-\nu)_*\CH^r(X;\Q) \simeq Gr_F^{\nu}\CH^r(X;\Q),
\end{equation}
  and accordingly there is a non-canonical decomposition
  \[
  \CH^r(Z;\Q) = \bigoplus_{\nu=0}^r\Delta_Z(2d_0-2r+\nu,2r-\nu)_*\CH^r(Z;\Q).
  \]
In summary, we will view the kernel of the Abel-Jacobi map in terms
of $Gr_F^{\nu}\CH^r(Z;\Q)$ for $\nu \geq 2$, i.e., under Conjecture~\ref{C1},
\begin{equation*}
\CH^r_{AJ}(Z;\Q) = F^2\CH^r(Z;\Q)  \simeq \bigoplus_{\nu=2}^rGr_F^{\nu}\CH^r(Z;\Q),
\end{equation*}
(to re-iterate, non-canonically). 

\begin{rmk}\label{RM}
Murre's idea~\cite{M} is that one can choose
the $ \Delta_Z(p,q) $ to be  commuting, pairwise orthogonal idempotents.
 By Beilinson and Jannsen, such a lift is possible if $\CH_{\hom}^{d_0}(Z\times Z;\Q)$ 
 is a nilpotent ideal under composition, which is a consequence of Assumptions \ref{ASS}.
 It should be pointed out that such $\Delta_Z(p,q)$'s are still not unique.
\end{rmk}

The following will play an important role in Section~\ref{SMR}.

\begin{prop}\label{GP}
Under Assumptions~\ref{ASS}, the map
\[
\Xi_{Z,\ast} := \bigoplus_{\nu=2}^r\Delta_Z(2d_0-2r+\nu,2r-\nu)_*: F^2\CH^r(Z;\Q) \to F^2\CH^r(Z;\Q),
\]
is an isomorphism. Moreover, if the $\Delta_Z(p,q)$'s are chosen as in Murre's Chow-K\"unneth
decomposition (viz., in Remark \ref{RM}), then it is the identity.
\end{prop}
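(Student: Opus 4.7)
The plan is to reduce the isomorphism claim to a nilpotency argument inside the BB filtration. First I would invoke the two axioms of the BB filtration that Assumptions~\ref{ASS} secure, namely that every correspondence preserves $F^{\bullet}$ and that $F^{r+1}\CH^r(Z;\Q) = 0$. The first axiom shows that each summand $P_\nu := \Delta_Z(2d_0-2r+\nu,2r-\nu)_*$, and therefore $\Xi_{Z,*} = \sum_{\nu=2}^r P_\nu$, restricts to an endomorphism of $F^2\CH^r(Z;\Q)$.

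Next I would compute the induced action on each graded piece. By \eqref{Delta1}, $P_\ell$ acts on $Gr_F^{\nu}\CH^r(Z;\Q)$ as $\delta_{\ell,\nu}\,\textup{id}$, so for $\nu \in \{2,\ldots,r\}$ one has
\[
\Xi_{Z,*}\big|_{Gr_F^{\nu}\CH^r(Z;\Q)} = \sum_{\ell=2}^r \delta_{\ell,\nu}\,\textup{id} = \textup{id}.
\]
Hence $N := \Xi_{Z,*} - \textup{id}$ is zero on every graded piece of $F^2\CH^r(Z;\Q)$, which (since $N$ preserves $F^\bullet$) is equivalent to the statement $N\bigl(F^\nu\CH^r(Z;\Q)\bigr)\subseteq F^{\nu+1}\CH^r(Z;\Q)$ for all $\nu\geq 2$. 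Iterating and using $F^{r+1} = 0$ gives $N^{r-1}|_{F^2} = 0$, so $N$ is nilpotent on $F^2\CH^r(Z;\Q)$ and $\Xi_{Z,*} = \textup{id} + N$ is inverted by the finite Neumann series $\sum_{k\geq 0}(-N)^k$.

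For the refinement under Murre's choice, if the $\Delta_Z(p,q)$'s are pairwise orthogonal idempotents with $\sum_{p+q=2d_0}\Delta_Z(p,q) = \Delta_Z$ in $\CH^{d_0}(Z\times Z;\Q)$, then the $P_\nu$'s become pairwise orthogonal idempotents on $\CH^r(Z;\Q)$ summing to $\textup{id}_{\CH^r(Z;\Q)}$, and Murre's description of the filtration gives the honest direct-sum decomposition $F^2\CH^r(Z;\Q) = \bigoplus_{\nu\geq 2}P_\nu\CH^r(Z;\Q)$ (the summands corresponding to $\nu=0,1$ being precisely the kernel of the projection onto $F^2$). On this decomposition, $\Xi_{Z,*} = \sum_{\nu\geq 2} P_\nu$ restricts to the identity by the orthogonality and idempotency of the $P_\nu$, giving $\Xi_{Z,*}|_{F^2} = \textup{id}$ on the nose.

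I do not expect a genuine obstacle: the entire argument is formal once the BB package is available, so the only point that deserves any care is verifying that ``zero on every graded piece'' upgrades to ``shifts the filtration by one'', which is immediate from preservation of $F^\bullet$ and the finiteness $F^{r+1} = 0$, both of which are built into the package provided by Assumptions~\ref{ASS} (cf.~\cite[\S4]{Lew2}).
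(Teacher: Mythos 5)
Your proof is correct, and it rests on exactly the same inputs as the paper's own argument: the identity~\eqref{Delta1} on graded pieces, preservation of the filtration by correspondences, the vanishing $F^{r+1}\CH^r(Z;\Q)=0$, and (for the second statement) the idempotency of Murre's projectors. Only the packaging differs. For the isomorphism, the paper runs a downward induction with the $5$-lemma on the exact sequences $0\to F^{\nu+1}\to F^{\nu}\to Gr_F^{\nu}\to 0$, whereas you observe that $N=\Xi_{Z,\ast}-\textup{id}$ shifts the filtration by one and is therefore nilpotent, so $\Xi_{Z,\ast}$ is unipotent and invertible via a finite Neumann series; the two arguments are interchangeable, and yours is arguably the cleaner way of saying ``identity on the graded pieces of a finite filtration implies isomorphism.'' For the second statement, the paper starts from $\textup{id}=\sum_{\ell}P_{\ell}$ and kills the terms with $\ell\notin\{2,\dots,r\}$ on $F^{2}$ by iterating the filtration shift using idempotency; you instead invoke the decomposition $F^{2}\CH^r(Z;\Q)=\bigoplus_{\nu\ge 2}P_{\nu}\CH^r(Z;\Q)$ coming from Murre's description of the filtration, on which $\sum_{\nu\ge 2}P_{\nu}$ is visibly the identity. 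That decomposition is legitimate under the standing assumptions (it is part of Jannsen's equivalence of Murre's conjectures with the existence of the BB filtration), but be aware that deriving it from the facts \eqref{Delta1}--\eqref{Delta2} actually recorded in the paper requires precisely the same iterate-and-use-idempotency step that the paper writes out explicitly, so you have not avoided that step, only outsourced it.
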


\begin{proof} If $r=2$, this is obvious, as $F^r\CH^r(Z;\Q) = Gr_F^r\CH^r(Z;\Q)$. So assume
$r>2$. The $5$-lemma, together with the diagram
\[
\begin{matrix}
&&F^r\CH^r(Z;\Q)\\
&\\
&&||\\
&\\
0&\to&Gr_F^r\CH^r(Z;\Q)&\to&F^{r-1}\CH^r(Z;\Q)&\to&Gr_F^{r-1}\CH^r(Z;\Q)&\to&0\\
&\\
&&\Xi_{Z,\ast}\biggl\downarrow ||\quad&&\Xi_{Z,\ast}\biggl\downarrow\quad
&&\Xi_{Z,\ast}\biggl\downarrow ||\quad\\
&\\
0&\to&Gr_F^r\CH^r(Z;\Q)&\to&F^{r-1}\CH^r(Z;\Q)&\to&Gr_F^{r-1}\CH^r(Z;\Q)&\to&0
\end{matrix}
\]
tells us that the middle vertical arrow is an isomorphism. By an inductive-recursive argument, we arrive at
another $5$-lemma argument:
\[
\begin{matrix}
0&\to&F^3\CH^r(Z;\Q)&\to&F^{2}\CH^r(Z;\Q)&\to&Gr_F^{2}\CH^r(Z;\Q)&\to&0\\
&\\
&&\Xi_{Z,\ast}\biggl\downarrow\wr\quad&&\Xi_{Z,\ast}\biggl\downarrow\quad
&&\Xi_{Z,\ast}\biggl\downarrow ||\quad\\
&\\
0&\to&F^3\CH^r(Z;\Q)&\to&F^{2}\CH^r(Z;\Q)&\to&Gr_F^{2}\CH^r(Z;\Q)&\to&0
\end{matrix}
\]
which implies the isomorphism in the proposition.

\medskip

For the second statement, clearly
$ \Delta_Z = \oplus_{\ell=2r-2d_0}^{2r} \Delta_Z(2d_0-2r+\ell,2r-\ell) $
induces the identity.
But from~\eqref{Delta2} we see that the terms with $ \ell \ge r+1 $
do not contribute because $ Gr_F^\ell \CH^r(Z;\Q) = 0 $. Also, from~\eqref{Delta1} we find that if
$ \ell \ne \nu $, then $ \Delta_Z(2d_0-2r+\ell,2r-\ell) $ maps
$ F^\nu \CH^r(Z;\Q) $ into $ F^{\nu+1} \CH^r(Z;\Q) $.
But if $ \ell < \nu $ and this correspondence is  idempotent then we can
iterate this, finding it kills $ F^\nu \CH^r(Z;\Q) $
as $ F^{r+1} \CH^r(Z;\Q) = 0 $. 
Taking $ \nu =2 $, we are done.
\end{proof}

So to understand
more about $\CH^r_{AJ}(Z;\Q)$, it makes sense to study
\[ 
\Delta_Z(2d_0-2r+\nu,2r-\nu)_* \CH^r_{AJ}(Z;\Q),
\]
for $ 2 \le \nu \le r$.

\medskip

\section{Main results}\label{SMR}

In this section, we will be assuming Assumptions~\ref{ASS} as well
as Conjecture~\ref{C1}.
{\em We shall also assume that we are working with Murre's Chow-K\"unneth decomposition.}
Furthermore, $ \ol{S} $ and $ X $ are assumed smooth and projective,
with $\dim \ol{S} = N$, $\dim X = d$. Then we have the results
of Section~\ref{generalsection} for $ Z = \ol{S} \times X $,
with dimension $ d_0 = N + d $.

\begin{rmk}\label{KR}
This remark is critical to understanding our approach to
the main results in the remainder of this paper. We want to take
some earlier results, in particular Proposition~\ref{GP}, one step further.
Let us assume the notation and setting in Proposition~\ref{KeyP},
for $\ol{\X} = \ol{S}\times X$, with~$\K$ as in~\eqref{eq:2.5}.
Our goal is to show  that the RHS of  Proposition~\ref{KeyP} is zero; in particular,
that 
\[
\Xi_0 := \ker\big(AJ : \K\to J\big(H^{2r-1}(\ol{S}\times X,\Q(r))\big)\big)
\]
is contained in $ N^1_{\ol{S}}\CH^r(\ol{S}\times X;\Q(r)) $.
Proposition~\ref{GP} shows that the induced map
\begin{equation} \label{EE3}
\bigoplus_{\nu=2}^r\Delta_{\ol{S}\times X}(2(N+d)-2r+\nu,2r-\nu)_*: \Xi_0\to \Xi_0,
\end{equation}
is  the identity. 
\end{rmk}

Below we only consider $\ell\leq N$ since
ultimately we will be passing to the generic point $\eta_{\ol{S}}$ of $\ol{S}$, hence through an
affine $S\subset \ol{S}$, where we apply the affine weak Lefschetz theorem, with $D =\ol{S}\bs S$.
Although not needed, we could also take $\ell \geq 2$ because the cases $\ell = 0, 1$
can be proved as in the proof  of Proposition~\ref{splitcase1}(2).

\medskip
Note that
\begin{equation} \label{E0E}
\Gamma\big(H^{2r-1}(S\times X,\Q(r))\big) = \bigoplus_{\ell=1}^N\Gamma\biggl(H^{\ell}(S,\Q(1))\otimes H^{2r-\ell-1}(X,\Q(r-1))\biggr),
\end{equation}
hence we break down our arguments involving each  of the $N$ terms on the RHS of~\eqref{E0E}.
This is similar to how we handled things in \S\ref{S2}.
Note that if we apply the K\"unneth projector
$[\Delta_{\ol{S}}\otimes \Delta_X(2d-2r+\ell+1,2r-\ell-1)]_*$
to the short exact sequence in~\eqref{eq:2.2} of \S\ref{SN} (with $\Y = D\times X$),
where the action of the aforementioned K\"unneth projector on $H^{2r-1}(S\times X,\Q(r))$
is given by ${\rm Pr}_{13,*} \big({\rm Pr}_{23}^*[\Delta_X(2d-2r+\ell+1,2r-\ell-1)]\cup {\rm Pr}_{12}^*(-)\big)$, 
observing that both ${\rm Pr}_{13},\ {\rm Pr}_{12}:
S\times X\times X \to S\times X$ are proper and flat, we end up with the
short exact sequence:
\begin{equation*}
0 \to \biggl\{\frac{H^{\ell}(\ol{S},\Q(1))}{{H}^{\ell}_D(\ol{S},\Q(1))}\biggr\}\otimes H^{2r-\ell-1}(X ,\Q(r-1))\to H^{\ell}(S,\Q(1))
\otimes H^{2r-\ell-1}(X ,\Q(r-1))
\end{equation*}
\[
 \to  H^{\ell+1}_D(\ol{S},\Q(1))^{\circ}\otimes H^{2r-\ell-1}(X ,\Q(r-1))\to 0,
 \]
 where 
\[
H^{\ell+1}_D(\ol{S},\Q)^{\circ} = \ker\big(H^{\ell+1}_D(\ol{S},\Q) \to H^{\ell+1}(\ol{S},\Q)\big).
\]
This accordingly modifies the bottom row of~\eqref{eq:2.3} of \S\ref{SN}  in the obvious way.
As a reminder, the K\"unneth components cycle representatives $\Delta_X(2d-\bullet,\bullet)$ 
of the diagonal class $\Delta_X$  (and of $\Delta_{\ol{S}}$, hence the product
$\Delta_{\ol{S}\times X} = \Delta_{\ol{S}}\otimes \Delta_X$)
are now assumed chosen in the sense of Murre (see Remark~\ref{RM}).
So we can likewise apply the projector $\Delta_{\ol{S}}\otimes \Delta_X(2d-2r+\ell+1,2r-\ell-1)$
to the top row of~\eqref{eq:2.3} of \S\ref{SN}, and arrive at a modified commutative
diagram~\eqref{eq:2.3}, based on $\ell = 1,...,N$. We can be more explicit here.
For the sake of brevity, 
let us denote $ \Delta_X(2d-2r+\ell+1,2r-\ell-1) $ in $ \CH^d(X\times X;\Q)$ here with
$ P $, and let $\Y = D\times X$ for some codimension one
subscheme  $D\subset \ol{S}$. Then $\Delta_{\ol{S}}\otimes P$ acts on
$\CH^r(\ol{S}\times X,1;\Q)$ in a natural way, and its action on 
$\CH^r_{D\times X}(\ol{S}\times X;\Q)$ is given as follows.
First of all, $\CH^r_{D\times X}(\ol{S}\times X;\Q) = \CH^{r-1}(D\times X;\Q)$. There are proper flat maps
$D\times X \times X$ given by projections $Pr_{12}: D\times X \times X \to D\times X$, $Pr_{23}: D\times X \times X\to X\times X$
and $Pr_{13}:D\times X\times X\to D\times X$.
For $\gamma$ in $\CH^{r-1}(D\times X;\Q)$, $Pr_{12}^*(\gamma)$
in $ \CH^{r-1}(D\times X\times X;\Q)$ is defined (flat pullback). 
For $P\in \CH^d(X\times X;\Q)$, the intersection 
$Pr_{12}^*(\gamma) \bullet Pr_{23}^*(P)\in \CH^{r-1+d}(D\times X\times X;\Q)$
is likewise well defined~\cite[\S2]{F}. The action then is given by
$Pr_{13,*}\big(Pr_{12}^*(\gamma) \bullet Pr_{23}^*(P)\big)$ in
$ \CH^{r-1}(D\times X;\Q) = \CH^r_{D\times X}(\ol{S}\times X;\Q)$.
The action of $\Delta_{\ol{S}}\otimes P$ on $\CH^r_{\hom}(\ol{S}\times X;\Q)$
is clear. Finally, by an elementary Hodge theory argument, one arrives
at a modified version of Proposition~\ref{KeyP}. Specifically, $\Delta_{\ol{S}}\otimes P$
acts naturally on all terms on the RHS of the display in Proposition~\ref{KeyP}, making
use of  functoriality of the Abel-Jacobi map, and operates naturally on the LHS,
as clearly evident in the above discussion following (\ref{E0E}). As $\ell$ ranges from $1,...,N$, both sides
of the aforementioned display decompose accordingly into a direct sum.

\medskip

We need to determine what 
$\Delta_{\ol{S}}\otimes \Delta_X(2d-2r+\ell+1,2r-\ell-1)$ does to $\gamma$, which is an
algebraic cycle of codimension $r$ (dimension $N+d-r$) on $\ol{S}\times X$, but supported on $D\times X$.
Decomposing $ \Delta_{\ol{S}} $, we  can write 
\[
\Delta_{\ol{S}\times X}(2(N+d)-2r+\nu, 2r-\nu),
\]
as a sum of
\begin{equation} \label{OC}
\Delta_{\ol{S}}(2N+\nu-\ell-1,\ell+1-\nu)\otimes \Delta_X(2d-2r+\ell+1,2r-\ell-1)
.
\end{equation}
We recall that we have $  2 \le \nu \le r $, and, as indicated earlier, only consider $ \ell $
with $ 2 \le \ell \le N $.
Before stating our next result, it is helpful to introduce the following, which includes those $\ol{S}$ 
which are complete intersections in projective space or more generally a Grassmannian.

\begin{lemma} \label{oddevenlemma}
Suppose that $\ol{S}$ is a  variety of dimension
$ N $ such that for $i\ne N$:
\[
H^i(\ol{S},\Q)  \ is \ zero\ for \ i\ odd \ and \ generated\ by \  algebraic\ cycles\ for\ i\ even.
\]
Then $\ol{S}$ admits a Chow-K\"unneth decomposition in the sense of~\cite{M},
Remark~\ref{RM} with the supports
of the K\"unneth projectors compatible with the supports of the cohomology classes
in $H^{\bullet}(\ol{S},\Q)$. Specifically, for $j\ne N$,
$ \Delta_{\ol{S}}(2N-2j,2j) $ is contained in the image of 
$ \CH^{N-j}(\ol{S}; \Q) \otimes \CH^j(\ol{S}; \Q) $
under pullback to $ \CH^r(\ol{S} \times \ol{S} ; \Q) $ and taking the product..
\end{lemma}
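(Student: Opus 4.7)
The plan is to produce, for every $j\in\{0,1,\dots,N\}$ outside the middle (i.e.\ with $2j\ne N$), an explicit algebraic representative of $\Delta_{\ol{S}}(2N-2j,2j)$ as a sum of products $\alpha\times\beta$ on the two factors of $\ol{S}\times\ol{S}$, and then to define the remaining middle projector by subtraction from the diagonal. The hypothesis immediately kills all odd-odd K\"unneth components of $[\Delta_{\ol{S}}]$ away from $(N,N)$: if $\Delta_{\ol{S}}(p,q)$ has $p+q=2N$ with both $p,q$ odd, then at most one of $p,q$ can equal $N$, so by hypothesis at least one of $H^p(\ol{S},\Q)$ or $H^q(\ol{S},\Q)$ vanishes and the component is zero. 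Outside the middle only the even-even pieces survive, and by hypothesis their two factors $H^{2N-2j}(\ol{S},\Q)$ and $H^{2j}(\ol{S},\Q)$ are generated by classes of algebraic cycles of codimensions $N-j$ and $j$, respectively.

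First I would choose, for each such $j$, finite sets $\{\alpha_i^{(j)}\}\subset \CH^{N-j}(\ol{S};\Q)$ and $\{\beta_i^{(j)}\}\subset \CH^{j}(\ol{S};\Q)$ whose cycle classes give Poincar\'e-dual $\Q$-bases of $H^{2N-2j}(\ol{S},\Q)$ and $H^{2j}(\ol{S},\Q)$; this is available precisely because both groups are algebraic and the cup product between them is a perfect pairing. Define
\[
\Delta_{\ol{S}}(2N-2j,2j)\ :=\ \sum_i \alpha_i^{(j)}\times\beta_i^{(j)}\ \in\ \CH^N(\ol{S}\times\ol{S};\Q).
\]
The cohomology class of this cycle is, by the standard dual-basis computation, exactly the $(2N-2j,2j)$ K\"unneth component of $[\Delta_{\ol{S}}]$. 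Composition of product-type correspondences obeys
\[
(\alpha_1\times\beta_1)\circ(\alpha_2\times\beta_2)\ =\ \deg(\alpha_1\bullet\beta_2)\,(\alpha_2\times\beta_1),
\]
so the normalization $\deg(\alpha_i^{(j)}\bullet\beta_k^{(j')})=\delta_{jj'}\delta_{ik}$ forced by the dual basis makes the $\Delta_{\ol{S}}(2N-2j,2j)$ into pairwise orthogonal idempotents in the correspondence ring on the nose; no appeal to nilpotency of homologically trivial cycles is required to lift them.

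I would then set
\[
\Delta_{\ol{S}}(N,N)\ :=\ \Delta_{\ol{S}}\ -\sum_{j:\,2j\ne N}\Delta_{\ol{S}}(2N-2j,2j).
\]
Because $\Delta_{\ol{S}}$ is the identity element of the correspondence ring, a short calculation using the orthogonality and idempotency above gives $\Delta_{\ol{S}}(N,N)^2=\Delta_{\ol{S}}(N,N)$ and $\Delta_{\ol{S}}(N,N)\circ\Delta_{\ol{S}}(2N-2j,2j)=\Delta_{\ol{S}}(2N-2j,2j)\circ\Delta_{\ol{S}}(N,N)=0$ for every outer $j$; cohomologically it absorbs the missing middle K\"unneth component. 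Hence the full collection is a Chow-K\"unneth decomposition in the sense of Murre (Remark~\ref{RM}), and by construction each outer projector is supported on a finite union of products of irreducible cycles of the prescribed codimensions on the two factors of $\ol{S}\times\ol{S}$, as asserted in the lemma.

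The main item to track carefully is the perfect-duality normalization of the bases $\{\alpha_i^{(j)}\}$ and $\{\beta_i^{(j)}\}$: the whole argument rests on the intersection pairing on $\ol{S}$ being a perfect pairing between cycle classes in complementary algebraic-cohomology degrees, which is exactly what the hypothesis buys. Without it one would only get orthogonal idempotents modulo homological equivalence and would have to invoke the Beilinson--Jannsen lifting theorem cited in Remark~\ref{RM}; here the whole construction is completely explicit.
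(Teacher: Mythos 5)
Your proof is correct and follows essentially the same route as the paper's: product-type correspondences built from algebraic cycles in complementary codimensions whose classes form dual bases under the intersection pairing, orthogonal idempotency deduced from the composition formula for products (the paper makes explicit the dimension count showing the composition vanishes when the degrees differ, which is what your normalization $\deg(\alpha_i^{(j)}\bullet\beta_k^{(j')})=\delta_{jj'}\delta_{ik}$ implicitly encodes, since for $j\ne j'$ that intersection is not zero-dimensional), and the middle projector defined by subtraction from the diagonal. You are in fact slightly more explicit than the paper, which records the orthogonality computation only for a single pair $W_{2N-2j}\times V_{2j}$ and compresses the dual-basis step into ``we can arrange that $[\pi_{2j}]=[\Delta_{\ol{S}}(2N-2j,2j)]$.''
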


\begin{proof}
For $ j = 0,\dots, N $, 
let $W_{2N-2j}$ in $ \CH^{N-j}(\ol{S},\Q)$ and $V_{2j}$
in $ \CH^j(\ol{S},\Q)$ be algebraic cycles
such that $\lambda_j := \deg\big(\langle W_{2N-2j},V_{2j}\rangle_{\ol{S}}\big)  \ne 0$. Then
\[
\big\{ W_{2N-2k}\times V_{2k}\big\}\circ\big\{W_{2N-2j}\times V_{2j}\big\}
= \begin{cases} 0&\text{if $k\ne j$}\\
\lambda_j\cdot \{W_{2N-2j}\times V_{2j}\}&\text{if $k=j$}
\end{cases}
\]
To see this, compute
$ \big\{ W_{2N-2k}\times V_{2k}\big\}\circ\big\{W_{2N-2j}\times V_{2j}\big\} $
as
\begin{equation} \label{PReq}
Pr_{13,*}\big(\langle Pr_{12}^*(W_{2N-2k}\times V_{2k}),Pr_{23}^*(W_{2N-2j}\times V_{2j})\rangle_{\ol{S}\times \ol{S}\times\ol{S}}\big)
\end{equation}
in $\CH^N(\ol{S}\times\ol{S}; \Q)$.
If $ j=k $ the statement is clear. If
 $k>j$ then $V_{2k}\cap W_{2N-2j}=0$ by codimension, 
and if $ k < j $ then this intersection has dimension at least
1, so~\eqref{PReq}
has codimension bigger than~$ N$. In either case it is trivial.
This  principle allows us to define, for $ i \ne N $,
mutually orthogonal idempotents
$\pi_i$ in $ \CH^N(\ol{S}\times\ol{S};\Q)$ with
$[\pi_i] $ in $ H^{2N-i}(\ol{S},\Q)\otimes H^i(\ol{S},\Q)(N)$.
We can take $\pi_i = 0$ if $i\ne N$ odd,
so that $ [\pi_i] = [\Delta(2N-i,i)] = 0 $ by our assumption
on the odd cohomology groups.
For even $i=2j\ne N$, we can arrange that $[\pi_{2j}] = [\Delta(2N-2j,2j)]$ 
by using the assumption about the even cohomology groups being
generated by algebraic cocyles, as well as the
non-degeneracy of the intersection product.
Put $ \pi_N = \Delta_{\ol{S}} - \sum_{i \ne N} \pi_i $.
Because the $ \pi_i $ for $ i \ne N $ are mutually orthogonal
idempotents by construction, the same holds if we use all $ \pi_0, \dots, \pi_{2N} $.
Because $ [\pi_i] = [\Delta_{\ol{S}}(2N-i, i)] $ for $ i \ne N $,
it follows from the definition of $\pi_N$ that $ [\pi_N] = [\Delta_{\ol{S}}(N,N)] $ as well.
\end{proof}

\begin{thm} \label{MT1}
Suppose that $\ol{S}$ is a  variety of dimension
$ N $ such that for $i\ne N$:
\[
H^i(\ol{S},\Q)  \ is \ zero\ for \ i\ odd \ and \ generated\ by \  algebraic\ cycles\ for\ i\ even.
\]
Given  Assumptions~\ref{ASS} and Conjecture~\ref{C1},
then
\[
\CH^r(X_{\eta_{\ol{S}}},1;\Q) \to \Gamma\big(H^{2r-1}(X_{\eta_{\ol{S}}},\Q(r))\big),
\]
is surjective.
\end{thm}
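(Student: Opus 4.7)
The plan is to follow the blueprint of Remark \ref{KR} and the discussion preceding the theorem. Decompose the target $\Gamma(H^{2r-1}(X_{\eta_{\ol{S}}}, \Q(r)))$ by the K\"unneth formula \eqref{E0E} into $\ell$-components with $\ell \in [0, N]$, and treat each separately. The case $\ell = 0$ is vacuous by a weight computation, while $\ell = 1$ reduces to a tensor-of-classes argument as in Proposition \ref{splitcase1}(2), using $\CH^{r-1}(X, 0; \Q) \twoheadrightarrow \Gamma(H^{2r-2}(X, \Q(r-1)))$ (granted by the Hodge conjecture) cupped with $\CH^1(\eta_{\ol{S}}, 1; \Q) \twoheadrightarrow \Gamma(H^1(\eta_{\ol{S}}, \Q(1)))$. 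The substantive range is $2 \leq \ell \leq N$.

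For such $\ell$, apply the K\"unneth projector $[\Delta_{\ol{S}} \otimes \Delta_X(2d-2r+\ell+1, 2r-\ell-1)]_*$ to the commutative diagram \eqref{eq:2.3} (with $\Y = D \times X$), pass to the direct limit over divisors $D \subseteq \ol{S}$, and obtain an $\ell$-variant of Proposition \ref{KeyP}. Under Assumptions \ref{ASS} and Conjecture \ref{C1}, the cokernel of the $\ell$-component of $\cl_{r,1}^\eta$ becomes a quotient of the $\ell$-projection of $\Xi_0 := \ker(AJ: \K \to J(H^{2r-1}(\ol{S}\times X, \Q(r))))$ modulo the $\ell$-projection of $N^1_{\ol{S}}\CH^r(\ol{S}\times X; \Q)$. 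Since $\Xi_0 \subseteq F^2\CH^r(\ol{S}\times X; \Q)$, and every $\gamma \in \Xi_0$ is supported on $D_\gamma \times X$ for some divisor $D_\gamma \subsetneq \ol{S}$ (by the definition of $\K$), the task reduces to showing $\gamma^{(\ell)} \in N^1_{\ol{S}}\CH^r$ for every such $\gamma$, where $\gamma^{(\ell)}$ denotes the $\ell$-projection.

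Fix $\gamma \in \Xi_0$. Proposition \ref{GP} together with the decomposition \eqref{OC} and orthogonality of Murre's projectors yields
\begin{equation*}
\gamma^{(\ell)} = \sum_{i=\max(0,\, \ell-r+1)}^{\ell-1} [\Delta_{\ol{S}}(2N-i, i) \otimes \Delta_X(2d-2r+\ell+1, 2r-\ell-1)]_*(\gamma),
\end{equation*}
with $i = \ell + 1 - \nu$. Because $\ell \leq N$ and $\nu \geq 2$, we have $i \in [0, \ell-1] \subseteq [0, N-1]$, so $i \neq N$ and Lemma \ref{oddevenlemma} applies. For $i$ odd, $\Delta_{\ol{S}}(2N-i, i)$ vanishes by hypothesis on $\ol{S}$. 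For $i$ even with $i \geq 2$, Lemma \ref{oddevenlemma} writes $\Delta_{\ol{S}}(2N-i, i)$ as a $\Q$-linear combination of classes $W_k \times V_k$ with $V_k \in \CH^{i/2}(\ol{S}; \Q)$ of codimension $i/2 \geq 1$; unwinding the correspondence action on $\gamma$ as in the paragraph preceding Lemma \ref{oddevenlemma} (tensoring with $\Delta_X(\cdots)$ on the $X$-factor) produces a cycle supported on $V_k \times X$, which lies in $N^1_{\ol{S}}\CH^r$.

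The remaining case is $i = 0$, which arises only when $\ell \leq r-1$ (so that $\nu = \ell + 1 \leq r$). Here $\Delta_{\ol{S}}(2N, 0)$ is represented by $\lambda_0^{-1}[\text{pt}_0] \times [\ol{S}]$, and the projector sends $\gamma$ to $\lambda_0^{-1}\,\ol{S} \times \Delta_X(2d-2r+\ell+1, 2r-\ell-1)_*(\gamma_{\text{pt}_0})$. The key observation is that Lemma \ref{oddevenlemma} is free in the choice of the zero-cycle representative for $\pi_0$: its orthogonality with the other projectors $\pi_{2k}$ ($k \geq 1$) follows from codimension considerations irrespective of which 0-cycle of nonzero degree is used, and $\pi_N$ is simply redefined as the complement in $\Delta_{\ol{S}}$. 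Choosing $[\text{pt}_0] \in \ol{S} \setminus D_\gamma$ (possible since $D_\gamma \subsetneq \ol{S}$) makes $\gamma \cdot (\{\text{pt}_0\} \times X) = 0$ on the nose, so the $i = 0$ contribution vanishes. Summing over $i$ yields $\gamma^{(\ell)} \in N^1_{\ol{S}}\CH^r$, and assembling the $\ell$-components gives the desired surjectivity. I expect the main technical obstacle to be justifying this per-$\gamma$ flexibility in the 0-cycle representative for $\pi_0$ within the setup of Section \ref{SMR} (which fixes Murre's decomposition at the outset), and verifying that the identity of Proposition \ref{GP} persists for the adjusted decomposition.
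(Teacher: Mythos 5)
Your proposal is correct and follows essentially the same route as the paper's proof: decompose via the K\"unneth projectors~\eqref{OC}, invoke Remark~\ref{KR} and Proposition~\ref{GP}, kill the odd and $i=0$ components (the latter by moving the point representative of $\Delta_{\ol{S}}(2N,0)$ off $D_\gamma$ --- exactly the paper's ``suitable choice of $p$''), and place the even $i\ge 2$ components in $N^1_{\ol{S}}\CH^r(\ol{S}\times X;\Q)$ via the product supports supplied by Lemma~\ref{oddevenlemma}. The flexibility in the choice of the zero-cycle representative for $\pi_0$ that you flag as the main technical obstacle is used in the paper's proof as well, without further justification.
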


\begin{proof}
We shall use the Chow-K\"unneth decomposition of $ \Delta_{\ol{S}} $
as in Lemma~\ref{oddevenlemma}.
Observe that with regard to~\eqref{OC},
\[
\Delta_{\ol{S}}(2N+\nu-\ell-1,\ell+1-\nu) = \Delta_{\ol{S}}(N,N) \Leftrightarrow \ell+1-\nu = N.
\]
But $\ell \leq N$  and $\nu \geq 2$, so this never happens. Also,
the situation where $\ell+1-\nu=0$ does not contribute.
Namely, remember that
 $ \gamma \in \CH^r_{D\times X}(\ol{S}\times X;\Q)^{\circ}$ maps to
 a class in $\CH_{AJ}^r(\ol{S}\times X;\Q)$. Since
$|\gamma|\subset D\times X$,
$\Delta_{\ol{S}}(2N,0) = \{p\}\times \ol{S}$ for some $p\in \ol{S}$, so that $D\times \ol{S}$ doesn't
meet  $\Delta_{\ol{S}}(2N,0)$ for a suitable choice of $p$, it follows in this case  that
\[
\big(\Delta_{\ol{S}}(2N,0)\otimes \Delta_X(2d-2r+\nu,2r-\nu)\big)_*(\gamma) = 0.
\]
For $ 1 \le \ell+1-\nu \le N-1 $, and 
$\gamma$ in $\K$, we have that
\[
\big(\Delta_{\ol{S}}(2N+\nu-\ell-1,\ell+1-\nu)\otimes \Delta_X(2d-2r+\ell+1,2r-\ell-1)\big)_*(\gamma)
\]
is in $ N^1_{\ol{S}}\CH^r(\ol{S}\times X;\Q) $.
This is immediate from the fact that $\gamma$ is null-homologous
on $ \ol{S} \times X $
and the support of the Chow-K\"unneth components here,
as described in Lemma~\ref{oddevenlemma}.
Hence by Proposition~\ref{KeyP} (more precisely, the incarnation of Proposition~\ref{KeyP} in the
discussion following~\eqref{E0E}),
and~\eqref{EE3} of Remark~\ref{KR},
\[
\CH^r(X_{\eta_{\ol{S}}},1;\Q) \to \Gamma\big(H^{2r-1}(X_{\eta_{\ol{S}}},\Q(r))\big),
\]
is surjective.
\end{proof}

 \subsection{Grand finale}

For our final main result, we again consider
$\ol{S} = \ol{C}_1\times\cdots\times \ol{C}_N$, a product of smooth complete curves
(cf.\ Section~\ref{twocurve}).
As before, we restrict ourselves to  $ 2 \le \ell\leq N$.
Let us write
\[
\Delta_{\ol{C}_j} = e_j\times \ol{C}_j + \Delta_{\ol{C}_j}(1,1) + \ol{C}_j\times e_j,
\]
where $e_j\in \ol{C}_j$ and $ \Delta_{\ol{C}_j}(1,1)$ is defined by the equality. Consider the decomposition
\begin{equation} \label{ED}
\Delta_{\ol{S}} = \Delta_{\ol{C}_1}\otimes\cdots\otimes \Delta_{\ol{C}_N} = \bigotimes_{j=1}^N
\big\{e_j\times \ol{C}_j + \Delta_{\ol{C}_j}(1,1) + \ol{C}_j\times e_j\big\}.
\end{equation}
Note that $\Delta_{\ol{S}}(D) = D$. It needs to be determined what the RHS of~\eqref{ED}
 does to $D$, and more
precisely, what $\Delta_{\ol{S}}\otimes \Delta_X(2d-2r+\ell+1,2r-\ell-1)$ does to $\gamma$, which is an
algebraic cycle of dimension $N+d-r$ supported on $D\times X$. Now up to permutation, the RHS
of~\eqref{ED} is made up of terms of the form
\begin{equation} \label{summand}
( \Delta_{\ol{C}_j}(1,1))^{\otimes_{j=1}^{k_1}} \otimes
(\{e_{j}\times \ol{C}_j\})^{\otimes_{j=k_1+1}^{k_1+k_2}} \otimes
( \{\ol{C}_j\times e_j\}) ^ {\otimes_{j=k_1+k_2+1}^N}
,
\end{equation}
which is in the $ (k_1+2k_2, 2N-k_1-2k_2) $-component of $ \Delta_{\ol{S}} $.
Because in~\eqref{OC} we want 
$ \Delta_{\ol{S}}(2N+\nu-\ell-1,\ell+1-\nu) $, we have
\begin{equation} \label{E9E}
2N+\nu -\ell -1 =  k_1 + 2k_2
.
\end{equation}
Clearly, we have $ 0 \le k_1 + k_2 \le N $, $  2 \le \nu \le r $,
and we are restricting ourselves to $ 2 \le \ell \le N $.
Notice that if $k_1+k_2 < N$, we arrive at the situation where a correspondence in~\eqref{ED},
which when tensored with $\Delta_X$,
takes $\gamma$ to an element of $N^1_{\ol{S}}\CH^r(\ol{S}\times X;\Q)$.
If $k_1+k_2=N$, then  from~\eqref{E9E}, $N+\nu-\ell -1 = k_2\leq N$, and hence $\nu\leq \ell +1$. As in the
proof of Theorem~\ref{MT1}, we can ignore the case $\nu = \ell+1$.
 
 \begin{thm}\label{MT2}
Under Assumption~\ref{ASS}, and Conjecture~\ref{C1}, if $\ol{S} = \ol{C}_1\times\cdots \times \ol{C}_N$ is a product of smooth complete curves and $X$ a smooth projective variety, then for any $r\geq 1$,
\[
\CH^r(X_{\eta_{\ol{S}}},1;\Q) \to \Gamma\big(H^{2r-1}(X_{\eta_{\ol{S}}},\Q(r))\big),
\]
is surjective.
\end{thm}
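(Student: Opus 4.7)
The strategy is to extend the proof of Theorem~\ref{MT1} by using the product Chow--K\"unneth decomposition~\eqref{ED} of $\Delta_{\ol{S}}$. By Proposition~\ref{KeyP}, after taking the limit over divisors $D\subset\ol{S}$, the desired surjectivity is equivalent to the containment
\[
\Xi_0 := \ker\big(AJ : \K \to J(H^{2r-1}(\ol{S}\times X,\Q(r)))\big) \subseteq N^1_{\ol{S}}\CH^r(\ol{S}\times X;\Q).
\]
Under Assumptions~\ref{ASS} and Conjecture~\ref{C1}, Proposition~\ref{GP} together with Remark~\ref{KR} shows that $\sum_{\nu=2}^{r}\Delta_{\ol{S}\times X}(2(N+d)-2r+\nu,\,2r-\nu)_{\ast}$ acts as the identity on $\Xi_0$. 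Writing $\Delta_{\ol{S}\times X} = \Delta_{\ol{S}}\otimes\Delta_X$ and applying~\eqref{ED}, each such projector decomposes as a sum over ordered configurations $(k_1,k_2,k_3)$ with $k_1+k_2+k_3 = N$, where $k_1,k_2,k_3$ count the numbers of factors of types $\Delta_{\ol{C}_j}(1,1)$, $e_j\times\ol{C}_j$, and $\ol{C}_j\times e_j$ respectively, all subject to~\eqref{E9E}. It therefore suffices to show that each such summand sends $\Xi_0$ into $N^1_{\ol{S}}\CH^r(\ol{S}\times X;\Q)$.

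Let $\gamma$ represent a class in $\Xi_0$; after a standard moving argument within the rational equivalence class in $\CH^r(\ol{\X};\Q)$, we may assume $\gamma$ is supported on a divisor $D\times X$ with $D$ avoiding the distinguished point $p := (e_1,\dots,e_N)$ and in sufficiently general position with respect to the slices appearing below. Three cases arise. First, if $k_3\geq 1$, the target support of the correspondence in $\ol{S}$ has codimension $k_3\geq 1$, so the action on $\gamma$ is supported on $T\times X$ for some divisor $T\subset\ol{S}$, and since null-homology on $\ol{\X}$ is preserved by correspondences, the image lies in $N^1_{\ol{S}}\CH^r$. Second, if $k_3 = 0$ and $k_1 = 0$, the summand equals $\{p\}\times\ol{S}$ coming from~\eqref{ED}, and the action vanishes because $p\notin D$. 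Third, if $k_3 = 0$ and $k_1\geq 1$, I expand each factor via $\Delta_{\ol{C}_{i_s}}(1,1) = \Delta_{\ol{C}_{i_s}} - (e_{i_s}\times\ol{C}_{i_s}) - (\ol{C}_{i_s}\times e_{i_s})$ and distribute, producing $3^{k_1}$ subterms; any subterm containing at least one $\ol{C}_j\times e_j$ factor is handled by the first case.

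The remaining subterms of the third case have factors only in $\{\Delta_{\ol{C}_j},\,e_j\times\ol{C}_j\}$. Let $A\subseteq\{1,\dots,N\}$ be the positions carrying $\Delta_{\ol{C}_j}$. When $A = \emptyset$ the subterm reduces to the second case and vanishes. When $A\neq\emptyset$, the action on $\gamma$ consists of restricting $\gamma$ to the slice
\[
\prod_{j\in A}\ol{C}_j \times \{(e_j)_{j\in A^c}\} \times X \;\subset\; \ol{S}\times X
\]
and then taking the external product with $[\ol{C}_{A^c}]$ to spread back to $\ol{S}\times X$. For a generic choice of $(e_j)_{j\in A^c}$, the slice meets $D$ transversally in a divisor $D'\subseteq\prod_{j\in A}\ol{C}_j$, so the restriction of $\gamma$ is supported on $D'\times X$; after spreading, the resulting cycle is supported on $(D'\times\ol{C}_{A^c})\times X$, the pullback of the divisor $D'\times\ol{C}_{A^c}\subset\ol{S}$. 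Since null-homology on $\ol{\X}$ is preserved throughout, this subterm's image also lies in $N^1_{\ol{S}}\CH^r$. Combining all cases gives $\Xi_0\subseteq N^1_{\ol{S}}\CH^r$, and Proposition~\ref{KeyP} then yields the asserted surjectivity at $\eta_{\ol{S}}$. The principal technical obstacle is ensuring the generic-position requirements on $D$ simultaneously for the point $p$ and for every slice appearing in the third case, which is handled by a standard moving argument.
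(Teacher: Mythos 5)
Your reductions for $k_3\geq 1$ and for $k_1=k_3=0$ match the paper's and are sound: in those cases the output of the correspondence is a product cycle with a point (more generally, a positive\mbox{-}codimension algebraic cycle with nonzero class) in one factor, so null-homology on $\ol{S}\times X$ forces null-homology of the complementary factor, and hence null-homology on a divisor times $X$. The gap is in your third case. For the subterm $\bigotimes_{j\in A}\Delta_{\ol{C}_j}\otimes\bigotimes_{j\in A^c}(e_j\times\ol{C}_j)$ the image of $\gamma$ is, as you say, $\delta\times\ol{C}_{A^c}$ (factors reordered), where $\delta$ is the restriction of $\gamma$ to the slice; it is supported on $(D'\times\ol{C}_{A^c})\times X$ and is null-homologous on $\ol{S}\times X$. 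But membership in $N^1_{\ol{S}}\CH^r(\ol{S}\times X;\Q)$ requires the cycle to be homologous to zero \emph{on} $T\times X$ for some divisor $T\subset\ol{S}$, not merely to be supported there while being null-homologous on the ambient $\ol{S}\times X$. The difference between these two conditions is measured precisely by $\Gamma\big(H^{2r}_{T\times X}(\ol{S}\times X,\Q(r))^{\circ}\big)$ --- the obstruction group the entire paper is organized around; if ``supported on a pullback divisor and globally null-homologous'' implied ``lies in $N^1_{\ol{S}}$,'' then Proposition~\ref{KeyP} would yield the surjectivity unconditionally for an arbitrary base, which is not what is true or claimed. Concretely, global null-homology only gives $[\delta]=0$ in $H^{2r}(\ol{C}_A\times X,\Q(r))$, not the vanishing of the class of $\delta$ in the homology of $D'\times X$, and nothing in your argument bridges that. (Your further expansion $\Delta_{\ol{C}_j}(1,1)=\Delta_{\ol{C}_j}-e_j\times\ol{C}_j-\ol{C}_j\times e_j$ does not help: it merely relocates the difficulty into the subterm whose factors on $A$ are all full diagonals.)

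The paper closes exactly this gap by an induction on $N$, which your proposal lacks and for which it offers no substitute. Keeping the factors $\Delta_{\ol{C}_j}(1,1)$ intact, one regroups $\ol{S}\times X=\ol{S}_0\times X_0$ with $\ol{S}_0=\prod_{j\in A}\ol{C}_j$ a product of strictly fewer curves and $X_0=\prod_{j\in A^c}\ol{C}_j\times X$. The image $\gamma'$ then lies in $\CH^r_{AJ}(\ol{S}_0\times X_0;\Q)$, is supported on $E\times X_0$ for a divisor $E\subset\ol{S}_0$, hence dies at the generic point of $\ol{S}_0$; the inductive hypothesis (the theorem for $|A|<N$ curve factors, read through Proposition~\ref{KeyP} for the family over $\ol{S}_0$, with base case $N=1$ given by Proposition~\ref{splitcase1}) yields $\gamma'\in N^1_{\ol{S}_0}\CH^r(\ol{S}_0\times X_0;\Q)\subset N^1_{\ol{S}}\CH^r(\ol{S}\times X;\Q)$, i.e.\ null-homology on $(T_0\times\prod_{j\in A^c}\ol{C}_j)\times X$ for some divisor $T_0\subset\ol{S}_0$. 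Without this inductive input your third case does not go through, and the proof is incomplete.
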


 \begin{proof} We will prove this by induction on $N\geq 1$, the case $N=1$
 being part of Proposition~\ref{splitcase1}(2).
It will be crucial that no part of $\Delta_{\ol{S}}(N,N)$ occurs
because $ \ell \le N $ and $ \nu \ge 2 $ imply $ \ell + 1 - \nu < N $.
We shall argue on the summands of $ \Delta_{\ol{S}} $ that, up
to a permutation, are as in~\eqref{summand}.
The reductions preceding the theorem allow us to assume $k_1+k_2=N$, and
that $ 1\leq k_2  \leq N-1 $.
We see~\eqref{summand} is
 \begin{equation*}
\Xi :=  \Delta_{\ol{C}_1}(1,1)\otimes\cdots\otimes \Delta_{\ol{C}_{k_1}}(1,1)\otimes \{e_{k_1+1}\times
 \ol{C}_{k_1+1}\}\otimes\cdots\otimes \{e_N\times \ol{C}_N\}
 \end{equation*}
which is in the $ (N+k_2, N-k_2) $-component of $ \Delta_{\ol{S}} $.
Now let $ D \subset \ol{S} $ have codimension~1.
By choosing $\{e_{k_1+1},...,e_N\}$ appropriately,
 we can assume that
 \begin{equation*}
D' := \big| \Xi[D]\big|  {\subseteq} E \times \ol{C}_{k_1+1}\times\cdots\times\ol{C}_N
 \end{equation*}
 where $E \subset \ol{C}_1\times\cdots \times \ol{C}_{k_1}$ has codimension $1$.
Let $\gamma \in \CH^r_{AJ}(\ol{S}\times X;\Q) = F^2\CH^r(\ol{S}\times X;\Q)$, supported on $D\times X$, represent a class
\[
[\gamma]\in \Gamma\big(H^{\ell+1}_D(\ol{S},\Q(1))^{\circ}\otimes H^{2r-\ell-1}(X,\Q(r-1))\big),
\]
where
\[
H^{\ell+1}_D(\ol{S},\Q(1))^{\circ} = \ker\big(H^{\ell+1}_D(\ol{S},\Q(1)) \to H^{\ell+1}(\ol{S},\Q(1))\big).
\]
Then taking note of~\eqref{OC},
together with functoriality of the Abel-Jacobi map,
\[
\gamma' := \Xi_*(\gamma) \in \CH^r_{AJ}(\ol{S}\times X;\Q),
\]
is supported on $D'\times X$. Indeed, we have
\[
[\gamma']\in \Gamma\big(H^{\ell+1}_{D'}(\ol{S},\Q(1))^{\circ}\otimes H^{2r-\ell-1}(X,\Q(r-1))\big).
\]
By the properties of the BB filtration, and in light of Remark~\ref{KR}, 
we can reduce to the case where $\gamma = \gamma'$ and
$D = D'$.  Notice that
\[
H^{\ell+1}_{D'}(\ol{S},\Q)^{\circ} = \bigoplus_{j=1}^{2k_1-1}H^{j+1}_E(\ol{C}_1\times\cdots\times\ol{C}_{k_1},\Q)^{\circ}
\otimes H^{\ell-j}(\ol{C}_{k_1+1}\times\cdots\times \ol{C}_N,\Q).
\]
Thus we can reduce to $[\gamma']$ in
\[
\Gamma\big(H^{j+1}_E(\ol{C}_1\times\cdots\times\ol{C}_{k_1},\Q(1))^{\circ}
\otimes H^{\ell-j}(\ol{C}_{k_1+1}\times\cdots\times \ol{C}_N,\Q)\otimes H^{2r-\ell-1}(X,\Q(r-1))\big).
\]
Now let's put $\ol{S}_0 = \ol{C}_1\times\cdots\times\ol{C}_{k_1}$ and $X_0 = \ol{C}_{k_1+1}\times\cdots\times \ol{C}_N\times X$.
Then $\ol{S}_0\times X_0 = \ol{S}\times X$, and
$\gamma'\in \CH_{AJ}^r(\ol{S}_0\times X_0;\Q)$ is supported on $E\times X_0$. Further, $\dim \ol{S}_0 < \dim \ol{S}$.
Then by Proposition~\ref{KeyP}, and induction on $N$, 
\[
\gamma'\in N^1_{\ol{S}_0}\CH^r(\ol{S}_0\times X_0;\Q)\subset N^1_{\ol{S}}\CH^r(\ol{S}\times X;\Q).
\]
As mentioned above, the same applies to $\gamma$, and we are done.  
\end{proof}


\begin{thebibliography}{99}



\bibitem[A]{A}  M. Asakura,  {\em Motives and algebraic de Rham cohomology,} in: The Arithmetic and Geometry of Algebraic Cycles,
Proceedings of the CRM Summer School, June 7-19, 1998, 
Banff, Alberta, Canada
(Editors: B. Gordon, J. Lewis, S. M\"uller-Stach, S. Saito and N. Yui),
NATO Science Series {\bf 548} (2000),
Kluwer Academic Publishers.

\bibitem[Bl1]{Bl1} S. Bloch, {\em Algebraic cycles and higher $K$-theory,}  Advances
in Math.  {\bf 61} (1986), 267-304. 

\bysame, {\em The moving lemma for higher Chow groups,} Jour. of Alg. Geometry 
{\bf 3} (1994), 537-568.


\bibitem[dJ-L]{dJ-L} R. de Jeu,  J. D. Lewis,  (with an appendix by M. Asakura),
{\em Beilinson's Hodge conjecture for smooth varieties.} 
 J. K-Theory {\bf 11} (2013), no. 2, 243-282. 

\bibitem[F]{F} W. Fulton, {\em Rational equivalence of singular varieties,} Inst. Hautes \'Etudes Sci. Publ. Math. No. 
{\bf 45} (1975), 147-167.

\bibitem[GG]{GG} Mark Green and Phillip Griffiths, {\em  Hodge-Theoretic Invariants for Algebraic Cycles,}
Int. Math. Res. Not. 2003, no. {\bf 9}, 477-510.



\bibitem[Ja1]{Ja1} U. Jannsen, {\em Equivalence relations on algebraic cycles,}
in: The Arithmetic and Geometry of Algebraic Cycles,
Proceedings of the CRM Summer School, June 7-19, 1998, 
Banff, Alberta, Canada
(Editors: B. Gordon, J. Lewis, S. M\"uller-Stach, S. Saito and N. Yui),
NATO Science Series {\bf 548} (2000), 225-260,
Kluwer Academic Publishers.

\bibitem[Ja2]{Ja2} \bysame, {\em Mixed Motives and Algebraic K-Theory.} Springer-Verlag.

\bibitem[K-L]{K-L} M. Kerr, J. D. Lewis, {\em The Abel-Jacobi map for
higher Chow groups, II,}  Invent. Math. {\bf 170}, (2007), no. 2, 355-420.

\bibitem[SJK-L]{SJK-L}  S. J. Kang,  J. D. Lewis, {\em Beilinson's Hodge
conjecture for $K_{1}$ revisited.}  In: Cycles, Motives and Shimura varieties, 197-215, 
Tata Inst. Fund. Res. Stud. Math., Tata Inst. Fund. Res., Mumbai, 2010.

\bibitem[Lew0]{Lew0} J. D. Lewis, A Survey of the Hodge Conjecture. Second edition. 
Appendix B by B. Brent Gordon. CRM Monograph Series, {\bf 10}. 
American Mathematical Society, Providence, RI, 1999. xvi+368 pp.

\bibitem[Lew1]{Lew1} \bysame, {\em A filtration on the Chow groups
of a complex projective variety,} Compositio Math. {\bf 128} (2001), no. 3, 299-322.
 
\bibitem[Lew2]{Lew2}   \bysame, {\em Abel-Jacobi equivalence and a variant of the Beilinson-Hodge
conjecture,} J. Math. Sci. Univ. Tokyo {\bf 17} (2010), 179-199.

\bibitem[Lew3]{Lew3}   \bysame, {\em Arithmetic normal functions and filtrations on
Chow groups,} Proc. Amer. Math. Soc. {\bf 140} (2012), no. 8, 2663-2670.

\bibitem[M]{M} J. P. Murre, {\em On a conjectural filtration on the Chow
groups of an algebraic variety.} Mathematical Institute, Univ. of Leiden.
Report W 92-06, April 1992.

\end{thebibliography}
\end{document}